\numberwithin{equation}{section}
\DeclarePairedDelimiter\floor{\lfloor}{\rfloor}
\theoremstyle{plain}
\newtheorem{Th}{Theorem}[section]
\newtheorem{Lemma}[Th]{Lemma}
\theoremstyle{definition}
\newtheorem{Rem}[Th]{Remark}
\newtheorem{Ex}[Th]{Example}
\definecolor{ao(english)}{rgb}{0.0, 0.5, 0.0}
\begin{document}

\title[Counting essential minimal surfaces in n-manifolds]{Counting essential minimal surfaces in closed negatively curved n-manifolds}

\author{Ruojing Jiang}

\address{University of Chicago, Department of Mathematics, Chicago, IL 60637} 
\email{ruojing@math.uchicago.edu}

 \subjclass[2019]{} 
 \date{}

\begin{abstract} For closed odd-dimensional manifolds with sectional curvature less or equal than $-1$, we define the minimal surface entropy that counts the number of surface subgroups. 
It attains the minimum if and only if the metric is hyperbolic. Moreover, we compute the entropy associated with other locally symmetric spaces and cusped hyperbolic 3-manifolds. 
\end{abstract}

\maketitle

\section{Introduction}

Let $M$ be a closed manifold with negative sectional curvature, a classical theorem states that among all metrics on $M$ with the same volume, the entropy counting the number of closed geodesics attains the minimum if and only if the metric is locally symmetric. Recently, Calegari-Marques-Neves \cite{Calegari-Marques-Neves} focused on closed three-dimensional manifolds admitting hyperbolic metrics and studied the counting property for minimal surfaces, which shifted attention from the one-dimensional case (geodesics) to two dimensions. 
Based on Hamenst{\"a}dt's construcion of surface subgroups in \cite{Hamenstadt}, they defined an entropy $E(h)$ to measure the number of related essential minimal surfaces with respect to the metric $h$ of sectional curvature at most $-1$. The entropy achieves the minimum value $2$ if and only if $h$ is hyperbolic. In this paper, we continue to investigate the extent to which the definition of entropy and the rigidity property hold for generalized ambient manifolds.

Now suppose $M=\mathbb{H}^n/\Gamma$ is a closed orientable $n$-manifold $(n\geq 3)$ that admits a hyperbolic metric $h_{hyp}$. 
Let $S$ be any closed surface immersed in $M$ with genus at least $2$, $S$ is \emph{essential} if the immersion is $\pi_1$-injective, and the image of $\pi_1(S)$ in $\pi_1(M)$ is called a \emph{surface subgroup}. 
Let $S(M,g)$ denote the set of surface subgroups of genus at most $g$ up to conjugacy, and let the subset $S(M,g,\epsilon)\subset S(M,g)$ consist of the conjugacy classes whose limit sets are $(1+\epsilon)$-quasicircles (see the definition below). Moreover, \begin{equation*}
    S_\epsilon(M)=\underset{g\geq 2}{\cup}S(M,g,\epsilon).
\end{equation*}

\subsection{Definitions}\label{1.1}
A homeomorphism $f:S^{n-1}_\infty\rightarrow S^{n-1}_\infty$ is called \emph{$K$-quasiconformal} if $K(f)=\underset{x\in S^{n-1}_\infty}{\text{ess sup}} K_f(x)\leq K$, where the \emph{dilatation} of $f$ is \begin{equation*}
        K_f(x)=\underset{r\rightarrow 0}{\lim\sup}\,\underset{y,z}{\sup}\frac{|f(x)-f(y)|}{|f(x)-f(z)|},
    \end{equation*}
    the supremum is taken over $y,z$ so that $r=|x-y|=|x-z|$.
    

A \emph{$K$-quasicircle} in $S^{n-1}_\infty$ is the image of a round circle under a $K$-quasiconformal map.

Now suppose $h$ is an arbitrary Riemannian metric on $M$. For any $\Pi\in S(M,g)$, we set \begin{equation*}
    \text{area}_h(\Pi)=\inf \{\text{area}_h(\Sigma):\Sigma\in\Pi\}.
\end{equation*}
Then the minimal surface entropy with respect to $h$ is defined as follows. 
\begin{equation}\label{eh}
    E(h)=\underset{\epsilon\rightarrow 0}{\lim}\,\underset{L\rightarrow\infty}{\lim\inf}\,\dfrac{\ln\#\{\text{area}_h(\Pi)\leq 4\pi(L-1):\Pi\in S_\epsilon(M)\}}{L\ln L}.
\end{equation}
This quantity measures the exponential growth rate of the number of surface subgroups, by Schoen-Yau \cite{Schoen-Yau} and Sacks-Uhlenbeck \cite{Sacks-Uhlenbeck}, each surface subgroup corresponds to an immersed essential surface which minimizes area in homotopy class, and we'll see that it is unique provided that $|A|_\infty^2<2$.

\subsection{Main theorem}
The main theorem related to $E(h)$ is the following.
\begin{Th}\label{main}
Let $M$ be a closed orientable manifold whose dimension $n\geq 3$ is odd, and $M$ admits a hyperbolic metric $h_{hyp}$.
\begin{enumerate}[(a)]
    \item Given an arbitrary metric $h$ on $M$, we have $E(h)\leq 2E_{\text{vol}}(h)^2$.
    
    \item If the sectional curvature of $h$ is less than or equal to $-1$, then \begin{equation*}
        E(h)\geq E(h_{hyp})=2.
    \end{equation*}
    Furthermore, if the equality holds, then $h$ is isometric to $h_{hyp}$.
\end{enumerate}
\end{Th}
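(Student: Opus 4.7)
The plan is to establish the upper bound (a), then the lower bound in (b), and finally the rigidity statement in (b). The three ingredients are Hamenst{\"a}dt's construction of surface subgroups with quasicircle limit sets, a Gauss--Bonnet estimate for minimal surfaces under the curvature hypothesis, and equidistribution of the resulting surfaces via mixing of the geodesic flow.

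For part (a), I would count subgroups in $S_\epsilon(M)$ via pants-type decompositions of their minimal representatives. The $(1+\epsilon)$-quasicircle hypothesis forces a genus-$g$ surface subgroup to be represented by an essential surface that decomposes into $O(g)$ pairs of pants whose cuffs are closed $h$-geodesics of length bounded by an explicit function of $g$. The number of admissible cuffs is governed by the Margulis-type asymptotic for closed geodesics, whose exponential growth rate is $E_{\text{vol}}(h)$; careful bookkeeping of cuff choices and pants pairings produces an upper count of the form $\exp\!\bigl(2 E_{\text{vol}}(h)^2\,L\ln L+o(L\ln L)\bigr)$, which yields $E(h)\leq 2 E_{\text{vol}}(h)^2$.

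For the lower bound in (b), the curvature hypothesis $\text{sec}_h\leq -1$ enters through the Gauss equation: for any minimal immersion of a surface $\Sigma$ of genus $g$ with second fundamental form $A$, one has $K_\Sigma=\text{sec}_h(T\Sigma)-\tfrac{1}{2}|A|^2\leq -1$. Gauss--Bonnet then yields the universal bound $\text{area}_h(\Sigma)\leq 4\pi(g-1)$, so every subgroup $\Pi\in S(M,L,\epsilon)$ automatically contributes to the counting function in \eqref{eh}. Therefore
\[
\#\{\text{area}_h(\Pi)\leq 4\pi(L-1):\Pi\in S_\epsilon(M)\}\geq \#S(M,L,\epsilon),
\]
and taking the $\liminf$ gives $E(h)\geq E(h_{hyp})$. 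The identity $E(h_{hyp})=2$ I would verify by a direct count of Hamenst{\"a}dt subgroups in $\mathbb{H}^n/\Gamma$, where the quasicircle condition is automatic.

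For the rigidity statement, suppose $E(h)=2$. Then the Gauss--Bonnet bound must be asymptotically saturated along a family of minimal surfaces realising the entropy, which forces $\int_\Sigma|A|^2\to 0$ and $\text{sec}_h(T\Sigma)\to -1$ in an averaged sense. Using mixing of the geodesic flow to equidistribute the tangent planes of the Hamenst{\"a}dt surfaces inside the Grassmannian bundle $\text{Gr}_2(TM)$, one concludes $\text{sec}_h\equiv -1$ on every $2$-plane. A manifold of constant sectional curvature $-1$ in the fixed homotopy type is then isometric to $(M,h_{hyp})$ by Mostow rigidity, available since $n\geq 3$.

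The main obstacle is the rigidity step: converting the asymptotic equality of entropies into a quantitative curvature deficit on the nearly optimal minimal surfaces, and then showing that their tangent planes fill out $\text{Gr}_2(TM)$ densely enough to force the pointwise conclusion. A secondary difficulty is the direct computation $E(h_{hyp})=2$ in odd dimension $n\geq 3$, which requires adapting Hamenst{\"a}dt's pants construction beyond $\mathbb{H}^3$ and verifying uniform quasicircle control on the resulting limit sets.
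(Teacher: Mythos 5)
Your outline captures the three top-level moves (Gauss--Bonnet for the area bound, a count of Hamenst\"adt subgroups for $E(h_{hyp})=2$, equidistribution for rigidity), but there are concrete gaps at precisely the places where this paper departs from Calegari--Marques--Neves.

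First, you never address \emph{branch points}, which the paper identifies as the primary new difficulty once $n\geq 4$: a least-area immersion $\Sigma$ in the homotopy class is only guaranteed to be a branched minimal immersion (Schoen--Yau, Sacks--Uhlenbeck). The generalized Gauss--Bonnet for branched minimal surfaces carries an extra term $-2\pi\sum_j d_{i_j}$, and more seriously, the computation $E(h_{hyp})=2$ requires the sharp two-sided area estimate $\text{area}(\Pi)\simeq 4\pi(g-1)$ for $\Pi\in S_\epsilon(M)$, which in turn requires $|A|^2_{L^\infty}\to 0$ and the \emph{absence} of branch points when $\epsilon$ is small. The paper devotes all of Section~\ref{section2} to this: trapping the convex hull of a $(1+\epsilon)$-quasicircle in a tube (Lemma~\ref{lem_barrier}), an Uhlenbeck-style uniqueness theorem for minimal surfaces with $|A|^2_\infty<2$ adapted to the noncompact, higher-codimension setting (Lemma~\ref{uniqueness}), and an argument identifying the least-area homotopy representative with the branch-point-free mod-2 area minimizer via Allard regularity. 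Without this, your claim that one can ``directly count Hamenst\"adt subgroups'' and match areas against $4\pi(g-1)$ is unsupported.

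Second, the equidistribution mechanism you invoke in the rigidity step is the wrong tool, and this is where the odd-dimension hypothesis (which your sketch never engages with) enters. Mixing of the geodesic flow gives equidistribution in $T^1M$, not in $\text{Gr}_2(TM)$ and certainly not of totally geodesic discs. The paper instead (i) proves that the set $\mathscr{L}$ of round circles arising as limits of nearly optimal surfaces is all of $\mathscr{C}$, using Shah's density lemma and the Mozes--Shah/Lee--Oh equidistribution theorem for totally geodesic submanifolds in an inductive argument over codimension (Lemmas~\ref{4.2} and~\ref{4.3}); and (ii) promotes density of totally geodesic discs to density of $2$-planes via Brin--Gromov ergodicity of the $2$-frame flow, which holds precisely for negatively curved manifolds of \emph{odd} dimension. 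Odd dimension is also required for Hamenst\"adt's construction, which excludes $SO(2m,1)$, i.e.\ even-dimensional real hyperbolic manifolds. Your proposal would not reveal where the dimension restriction is used, and the ``mixing of the geodesic flow'' step would not close the argument.

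Finally, you flag the step from approximate equality in Theorem~\ref{5.1} to the conclusion that $h$ is hyperbolic as an ``obstacle'' without suggesting how to bridge it; this is where the measure $\nu$ of \eqref{measure}, the statement \eqref{star}, and the whole homogeneous-dynamics input do the work, so the gap you identify is real and not minor.
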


\subsection{Sketch of proof}
(a) follows directly from the proof in \cite{Calegari-Marques-Neves}. In this paper, we prove (b) in several steps. Suppose that $\Pi\in S_\epsilon(M)$, and let $S$ be the essential minimal surface of $M$ in the homotopy class $\Pi$ so that $\text{area}(S)=\text{area}(\Pi)$. In Section \ref{section2}, we show that when $\epsilon$ is sufficiently small, $S$ is free of branch points, and its second fundamental form satisfies that \begin{equation}\label{A}
        |A|^2_{L^\infty(S)}=o_\epsilon(1).
    \end{equation}
Then we compute $E(h_{hyp})$ in Section \ref{section3}. And the inequality and the rigidity in (b) is shown later in Section \ref{section_rigidity}.

Additionally, in Section \ref{section_symmetricspace} and Section \ref{section_cusp}, we briefly investigate the entropy for other locally symmetric spaces and cusped hyperbolic 3-manifolds.

\subsection*{Acknowledgements}
I would like to thank my advisor Andr\'{e} Neves for his continued support and many useful suggestions.

\section{Proof of (\ref{A})}\label{section2}
In this section, we prove (\ref{A}). More specifically, Let $M=\mathbb{H}^n/\Gamma$ be a closed hyperbolic manifold with dimension $n\geq 4$. 
From the argument by Schoen-Yau \cite{Schoen-Yau} and Sacks-Uhlenbeck \cite{Sacks-Uhlenbeck}, for any surface subgroup $\Pi<\Gamma$, there exists an immersed minimal surface $S$ in $M$ with finitely many branch points, such that it minimizes the area in the corresponding homotopy class up to conjugacy. 
The appearance of branch points is the primary distinction from the case of dimension three. So the key point of (\ref{A}) is to rule out branch points.

\subsection{Trap of convex hull}\label{subsection_barrier}

First of all, we need to argue that the convex hull of $\Pi$ lies in a bounded region. In \cite{Seppi}, Seppi proved that for any minimal disc $D\subset \mathbb{H}^3$ asymptotic to a $(1+\epsilon)$-quasicircle $\gamma$, every point $x$ in $D$ lies on a geodesic segment $\alpha$ that is orthogonal at the endpoints to planes $P_+$ and $P_-$, such that the convex hull of $\gamma$ is bounded between $P_+$ and $P_-$. Additionally, the length of $\alpha$ goes to zero uniformly in $x$ as $\epsilon$ approaches zero. In the following lemma, we construct the bound of the convex hull of $\Pi$ while the ambient manifold has dimension at least $4$.

\begin{Lemma}\label{lem_barrier}
Given $\Pi\in S_\epsilon(M)$ and let $\gamma$ be the limit set of $\Pi$.
Then the convex hull of $\gamma$ is contained in a tube, 
which converges in Hausdorff distance as $\epsilon$ goes to zero, the limit is either empty, or it is contained in a totally geodesic disc.
\end{Lemma}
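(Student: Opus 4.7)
The plan is to prove Lemma~\ref{lem_barrier} by a compactness argument combining the rigidity of quasiconformal maps in dimension $\geq 3$ with the continuity of the convex-hull operation. First I would normalize: using the isometry group of $\mathbb{H}^n$, I may assume three chosen points of $\gamma$ lie at prescribed standard positions in $S^{n-1}_\infty$. Since $\gamma$ is a $(1+\epsilon)$-quasicircle, write $\gamma = f(C_0)$, where $C_0$ is the round circle through these three points and $f$ is a normalized $(1+\epsilon)$-quasiconformal homeomorphism of $S^{n-1}_\infty$.

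Next I would invoke the compactness of normalized $K$-quasiconformal mappings together with Liouville's rigidity theorem (in dimension $n-1\geq 3$, every $1$-quasiconformal map of $S^{n-1}$ is M\"obius). Any subsequential limit of the $f$'s as $\epsilon\to 0$ is $1$-quasiconformal and fixes the three normalization points, hence is the identity. Therefore $f\to\mathrm{id}$ uniformly on $S^{n-1}_\infty$, so $\gamma\to C_0$ in the Hausdorff topology on $S^{n-1}_\infty$.

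The last step is to promote this boundary convergence to a convergence of the convex hulls. Because $\mathrm{CH}(C_0)$ is precisely the totally geodesic disc $D_0\cong \mathbb{H}^2\subset \mathbb{H}^n$ bounded by $C_0$, and the convex hull operation is continuous in the Hausdorff topology on compact subsets of $\overline{\mathbb{H}^n}$, I conclude $\mathrm{CH}(\gamma)\to D_0$ in Hausdorff distance. Taking the tube to be the $r(\epsilon)$-neighborhood of $D_0$ with $r(\epsilon)\to 0$ then yields the containment claimed in the lemma. The \emph{empty} alternative accommodates the case when one works with a sequence $\Pi_k\in S_{\epsilon_k}(M)$ without renormalizing (for instance, when arguing in $M$ rather than $\mathbb{H}^n$): the natural lifts of the convex hulls may escape every compact set, so their Hausdorff limit in $\mathbb{H}^n$ is empty.

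The step I expect to be the main obstacle is making this last stage quantitative: a priori, $\mathrm{CH}(\gamma)$ could thicken substantially in directions transverse to $D_0$ even when $\gamma$ is close to $C_0$. I would argue pointwise. Any $x\in\mathrm{CH}(\gamma)$ lies in the convex hull of finitely many points of $\gamma$ (by the Carath\'eodory-type theorem for $\mathrm{CAT}(0)$ convex hulls), each within distance $O(\epsilon)$ of $C_0$. Restricting to a totally geodesic $\mathbb{H}^3\subset \mathbb{H}^n$ spanned by a choice of these points reduces the problem to the three-dimensional case, where Seppi's bracketing of the convex hull between two planes $P_\pm$ with collapsing separation supplies the uniform thickness bound. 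This is the higher-dimensional analogue of the $\mathbb{H}^3$ estimate from \cite{Seppi}.
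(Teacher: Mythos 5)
Your first two steps — normalizing three points, invoking compactness of normalized quasiconformal maps together with Liouville rigidity in dimension $n-1\geq 3$, and concluding that $\gamma$ lies in an $o_\epsilon(1)$-neighborhood of a round circle $C_0$ — are correct and amount to the same starting point as the paper. The problem is in the last stage, and it is more serious than the ``quantitative obstacle'' you flag.

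First, the target is wrong. You propose taking the tube to be the hyperbolic $r(\epsilon)$-neighborhood of the totally geodesic disc $D_0$. But the $r$-neighborhood of a totally geodesic $\mathbb{H}^2\subset\mathbb{H}^n$ has boundary at infinity exactly $\partial_\infty D_0=C_0$ (equidistant sets from $D_0$ close up onto $C_0$ as one goes to infinity; this is clear in the upper half-space model, where the $r$-neighborhood is a scale-invariant ``banana'' region). Meanwhile $\overline{\mathrm{CH}(\gamma)}\cap S^{n-1}_\infty=\gamma$, and $\gamma\neq C_0$ once $\epsilon>0$. Hence \emph{no} tubular neighborhood of $D_0$, however large $r$ is, contains $\mathrm{CH}(\gamma)$. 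The barrier must instead be asymptotic to a full \emph{neighborhood} of $C_0$ on $S^{n-1}_\infty$ — this is exactly what the paper does: it takes the $\epsilon'$-neighborhood $N$ of $C_0$ on the sphere, foliates $\partial N\cong S^1\times S^{n-3}$ by round circles, and builds the tube as the union of the totally geodesic discs these circles bound. The resulting region is convex and asymptotic to $\partial N$ rather than to $C_0$, which is why it can contain $\mathrm{CH}(\gamma)$.

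Second, the proposed fix via Carath\'eodory and restriction to $\mathbb{H}^3$ does not close the gap. In hyperbolic space there is no uniform Carath\'eodory bound: a point of $\mathrm{CH}(\gamma)$ is realized in the convex hull of finitely many points of $\gamma$, but the number of points is not bounded by $n+1$, so the points need not lie in any totally geodesic $\mathbb{H}^3$. And even restricting to a slice $\mathbb{H}^3\subset\mathbb{H}^n$, the intersection $\gamma\cap\partial_\infty\mathbb{H}^3$ is (generically) a finite set, not a quasicircle, so Seppi's $\mathbb{H}^3$ bracketing between parallel planes $P_\pm$ is not applicable there. What you actually need is the convex barrier argument — which is the content of the paper's proof, not something that can be imported from the $\mathbb{H}^3$ statement by slicing.
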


\begin{proof}
Since $\gamma$ is an $(1+\epsilon)$-quasicircle, we can find $\epsilon'=o_\epsilon(1)$ and a round circle $c\subset S^{n-1}_\infty$, such that the $\epsilon'$-neighborhood of $c$ in $S^{n-1}_\infty$, denoted by $N$, contains $\gamma$. Any round circle in $\partial N$ bounds a unique totally geodesic disc in $\mathbb{H}^n$, and thus there is a tube $T\subset \mathbb{H}^n$ homeomorphic to $D^2\times S^{n-3}$ asymptotic to $\partial N$. Notice that as $\epsilon$ goes to zero, the limit of $T$ is either empty, or it is contained in a totally geodesic disc. 
In addition, $T$ is convex, so the convex hull of $\gamma$ is bounded in $T$.
\end{proof}

\subsection{Uniqueness of minimal surfaces}

\begin{Lemma}\label{uniqueness}
Let $D$ be a minimal surface in $\mathbb{H}^n$ asymptotic to $\gamma\subset S_\infty^{n-1}$, where $\gamma$ is the limit set of a surface subgroup of $M$. If the norm squared of the second fundamental form of $D$ satisfies that $|A|_{L^\infty(D)} ^2<2$, then \begin{enumerate}[(a)]
\item $D$ is an embedded disc, \label{item_a}
\item If $D$ is the lift of a closed surface in $M$, then $\gamma=\partial_\infty D$ is a quasicircle, \label{item_b}
\item Assume that $\gamma$ is a quasicircle, then $D$ is the unique minimal surface with $\partial_\infty D=\gamma$ of all types. \label{item_c}
\end{enumerate}
\end{Lemma}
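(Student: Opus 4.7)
The unifying observation is that the pointwise bound $|A|^2_{L^\infty(D)}<2$ has two immediate consequences. First, by the Gauss equation, the intrinsic curvature of $D$ is $K_D=-1-\tfrac12|A|^2\in[-2,-1]$. Second, $D$ is \emph{strictly stable} with a uniform spectral gap: for every normal vector field $V$ along $D$, the Jacobi quadratic form in $\mathbb{H}^n$ is
\[
Q(V)=\int_D\Bigl(|\nabla^\perp V|^2+2|V|^2-\sum_{i,j}\langle A_{ij},V\rangle^2\Bigr)\geq \int_D\Bigl(|\nabla^\perp V|^2+\delta|V|^2\Bigr),
\]
where $\delta=2-|A|^2_{L^\infty(D)}>0$, using $\sum_{i,j}\langle A_{ij},V\rangle^2\leq|A|^2|V|^2$. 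All three parts of the lemma will rest on these two facts together with the tube trap of Lemma~\ref{lem_barrier}.

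For (a), the finite bound on $|A|$ already forbids branch points, since at a branch point the extrinsic curvature blows up. With $K_D\leq-1$, Cartan--Hadamard applied to the universal cover $\widetilde D$ gives $\widetilde D\cong\mathbb{R}^2$ with circle-like visual boundary. I would then check that the deck group on $\widetilde D$ must act trivially, because any nontrivial deck transformation would force $\partial_\infty D$ to involve more than one topological circle, contradicting $\partial_\infty D=\gamma$. For embeddedness, a self-intersection produces two smooth minimal sheets meeting either tangentially or transversally; the strong maximum principle for minimal submanifolds forces local coincidence at any tangential contact and the two sheets then agree globally by analytic continuation, while a transverse self-intersection cannot be reconciled with $\gamma$ being a single Jordan circle.

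For (b), the surface subgroup $\Pi=\pi_1(S)$ acts cocompactly on $D$ by isometries. By the Milnor--\v{S}varc lemma, $D$ is quasi-isometric to $\Pi$, which is itself quasi-isometric to $\mathbb{H}^2$. The inclusion $\iota:D\hookrightarrow\mathbb{H}^n$ is $1$-Lipschitz and $\Pi$-equivariant, and cocompactness of $S\subset M$ yields the reverse comparison $d_D(x,y)\leq C\,d_{\mathbb{H}^n}(x,y)+C$, so $\iota$ is a quasi-isometric embedding $\mathbb{H}^2\to\mathbb{H}^n$. The standard boundary extension for quasi-isometric embeddings of Gromov-hyperbolic spaces provides a quasi-symmetric embedding $S^1\to S^{n-1}_\infty$ with image $\gamma$, and by a Tukia--V\"ais\"al\"a-type extension any such map yields a quasicircle in the sense of Subsection~\ref{1.1}.

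For (c), let $D'$ be any minimal surface with $\partial_\infty D'=\gamma$, without any hypothesis on its second fundamental form. The plan is a barrier argument built from the strict stability of $D$: using the positive first eigenfunction of the vector Jacobi operator on $D$, construct a one-parameter family of normal graphs $\{D_t\}_{|t|<t_0}$ foliating a tubular neighborhood of $D$, whose mean curvature vectors point strictly inward toward $D$ from either side. By Lemma~\ref{lem_barrier} both $D$ and $D'$ are trapped in a bounded tube, which allows the foliation to be controlled uniformly out to the ideal boundary. An outermost tangential contact between $D'$ and some leaf $D_t$ contradicts the strong maximum principle for prescribed-mean-curvature surfaces unless $D'=D_t$, and minimality then forces $t=0$, yielding $D'=D$. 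The principal obstacle I anticipate is precisely this last step: the barrier foliation must be built in the normal bundle of a higher-codimension minimal surface, requiring a vector-valued Jacobi analysis rather than the scalar construction used by Seppi in $\mathbb{H}^3$, and the leaves must be controlled uniformly near the quasicircle $\gamma$ using the tube trap together with the quasi-symmetric modulus of $\gamma$.
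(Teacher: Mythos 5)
Your decomposition into Gauss-equation curvature bound plus strict stability is a reasonable starting point, but for parts (b) and (c) there are genuine gaps, and in (c) you have correctly located the obstacle without supplying the idea that overcomes it.

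For part (a), your Cartan--Hadamard step shows $\widetilde D\cong\mathbb{R}^2$, but the claim that ``any nontrivial deck transformation would force $\partial_\infty D$ to involve more than one topological circle'' is circular: the conclusion $D\cong\mathbb{R}^2$ (i.e.\ trivial deck group) is exactly what you are trying to establish, and a nontrivial $\pi_1(D)$ is perfectly compatible with $\partial_\infty D$ being a single circle a priori. The paper instead exploits the bound $|A|^2<2$ quantitatively: principal curvatures of $D$ in every normal direction are smaller than those of horospheres, so the distance function from an exterior point to $D$ is convex with a unique critical point; an annulus or a self-intersection in an extrinsic ball would produce more than one, which is a contradiction. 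This is clean and uses the hypothesis directly.

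For part (b), the reverse inequality $d_D(x,y)\leq C\,d_{\mathbb{H}^n}(x,y)+C$ does \emph{not} follow from cocompactness of $S\subset M$; it is precisely the assertion that the surface subgroup is quasiconvex (equivalently that $D$ is undistorted), which can fail for surface subgroups of hyperbolic lattices. The hypothesis $|A|^2<2$ must be invoked to get it. The paper does so by first proving that $\exp|_{ND}$ is a diffeomorphism onto $\mathbb{H}^n$ (Jacobi field computation showing the metric on the equidistant hypersurfaces $H^r$ is nonsingular for all $r>0$ because $\lambda_j<1$), then, following Uhlenbeck, that the pulled-back metric is quasi-isometric to a warped-product hyperbolic metric. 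This gives a global quasi-isometry $F:\mathbb{H}^n\to\mathbb{H}^n$ that extends to a quasiconformal map of $S^{n-1}_\infty$ sending a round circle to $\gamma$ — a much stronger statement than a quasi-isometric embedding of $\mathbb{H}^2$, and one that delivers the quasicircle conclusion directly in the sense of Subsection~\ref{1.1}.

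For part (c), the obstacle you flag is exactly where your approach stalls, and the paper's resolution is the key idea you are missing. Rather than trying to foliate a neighborhood of $D$ by codimension-$(n-2)$ normal graphs (which would require a vector Jacobi analysis and is indeed problematic), the paper uses the \emph{equidistant hypersurfaces} $\{H^r\}_{r>0}$, which are genuine codimension-one hypersurfaces in $\mathbb{H}^n$ given by $\exp|_{ND}$. A direct computation of the principal curvatures $\lambda_{ji}(x,r)$ of $H^r$, using only $\lambda_j<1$, shows that the sum of any two principal curvatures is strictly positive, i.e.\ $H^r$ is strictly two-convex for every $r>0$, and actually strictly convex for $r>\tanh^{-1}(|A|^2_{L^\infty(D)}/2)$. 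Two-convexity is exactly what the tangency maximum principle needs against a two-dimensional minimal surface $D'$: if $R=\sup\{r: D'\cap H^r\neq\varnothing\}$ is attained, $D'$ is tangent to the two-convex $H^R$, a contradiction. You also omit the case where this supremum is \emph{not} attained; the paper handles it by a blow-up argument, translating by isometries $T_i$ so that the almost-contact points go to the origin, using compactness of quasiconformal maps to extract a limiting quasicircle $\gamma_\infty$ and limiting surfaces $D_\infty,D'_\infty$, and deriving the same tangency contradiction in the limit. Without this, the sup-$r$ argument is incomplete even if the two-convex barrier is in hand.

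In short: your stability/Gauss-equation framing is not wrong, but it does not reach (b) or (c). The workhorse in the paper's proof is the explicit normal-exponential foliation by codimension-one equidistant hypersurfaces and their two-convexity under $|A|^2<2$, plus a quasiconformal compactness argument for the non-attained supremum, and none of these appear in your proposal.
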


The proof is similar to that of Uhlenbeck's argument \cite{Uhlenbeck}, but we need to be careful about the noncompactness of $D$, so it's worth providing the proof here.
\begin{proof}
First, (\ref{item_a}) can be shown by finding a horosphere $S$ that touches $D$ at a single point, and suppose $\partial_\infty S=y$, namely $S$ is centered at $y$. Notice that $\mathbb{H}^n$ is foliated by horospheres centered at $y$, and since the principal curvatures of $D$ restricted to any normal direction are less than that of the horospheres, the distance from a fixed point in $\mathbb{H}^n\setminus D$ to the points on $D$ is a convex function, so there's only one critical point which attains the minimum. This is impossible when the intersection of $D$ with an extrinsic ball is an annulus or it has self-intersections, so it forces $D$ to be an embedded disc.

Furthermore, we can also prove that $\exp|_{ND}$ is a diffeomorphism from $ND$ to $\mathbb{H}^n$, and therefore $\mathbb{H}^n\setminus D$ is foliated by a family of hypersurfaces $\{H^r\}_{r>0}$, where $H^r$ is the hypersurface at the fixed distance $r$ to $D$. 
To see this, we need some notations beforehand.
For $x\in D$, choose an oriented, orthonormal basis $\{e_1,e_2\}$ for $T_{x}D$, and an orthonormal basis $\{e_3,\cdots,e_n\}$ for $N_{x}D$. Then we obtain an orthonormal frame by applying the parallel transport along $\exp|_{ND}$. Since $D$ is a minimal surface, for any $3\leq j\leq n$, let $\lambda_{ijk}:=\langle A(e_j,e_k),e_j\rangle$, then we have 
$\lambda_{j11}=-\lambda_{j22}:=\lambda_j$. 
We assume $\lambda_j\geq 0$ in the following computation. 
Moreover, for any $3\leq j\leq n$ and $i\neq j $, let $V_{ji}(r)=v^i_j(r)e_i(r)$ be the Jacobi field along $\exp(re_j)$. When $1\leq i\leq 2$, it satisfies that $v^i_j(0)=1$, $(v^i_j)'(0)=\lambda_{jii}(x)=\pm \lambda_j(x)$. And when $3\leq i\leq n$, $i\neq j$, we take $v^i_j(0)=0,$ $(v^i_j)'(0)=1$. Then \begin{equation*}
    V_{ji}(r)=
    \begin{cases}
    (\cosh{r}\pm \lambda_j\sinh{r})e_i(r) & \text{if $1\leq i\leq 2$}\\
    \sinh{r}e_i(r) & \text{if $3\leq i\leq n$ and $i\neq j$}
    \end{cases}
\end{equation*}
So the induced metric on $H^r$ is 
\[
g_j(x,r)=
\begin{bmatrix}
(\cosh{r}+\lambda_j(x)\sinh{r})^2 & \\
& (\cosh{r}-\lambda_j(x)\sinh{r})^2 &\\
& & \sinh^2{r}& \\
& & & \ddots &\\
& & & & \sinh^2{r}

\end{bmatrix}
\]
It is nonsingular for any $r>0$ since $\lambda_j(x)< 1$. In addition, we've seen that $\exp|_{ND}: ND\rightarrow \mathbb{H}^n$ is a bijection because of the convexity of the distance function, thus it is a diffeomorphism and $\mathbb{H}^n$ admits a foliation structure.

Additionally, assume that $D$ is the lift of a closed surface $S\subset M$. Since the induced metric on $S$ is conformally equivalent to a hyperbolic metric $g_{hyp}$, the induced metric on $D\subset\mathbb{H}^n$ is also conformally equivalent to a hyperbolic metric, we still denote it by $g_{hyp}$. Because $\lambda_j<1$, by \cite{Uhlenbeck}, the induced metric on $H^r$ is quasi-isometrically equivalent to the metric 
\[
\begin{bmatrix}
\cosh^2{r}\,g_{hyp} & \\
& \sinh^2{r}\,I_{(n-3)\times(n-3)}

\end{bmatrix}
\]
The quasi-isometry  $F: \mathbb{H}^n\rightarrow\mathbb{H}^n$ extends to a quasiconformal map $f: S^{n-1}_\infty\rightarrow S^{n-1}_\infty$.
It means that $\gamma$ is the image of a round circle mapped by $f$, so equivalently, it is a quasicircle. This proves (\ref{item_b}).

Next, to prove (\ref{item_c}), we denote by $\lambda_{ji}(x,r)$ the principal curvature of $H^r$. Then
\begin{align*}
    \lambda_{j1}(x,r) &= \frac{\tanh r+\lambda_j(x)}{1+\lambda_j(x)\tanh r}>0,\\
    \lambda_{j2}(x,r) &= \frac{\tanh r-\lambda_j(x)}{1-\lambda_j(x)\tanh r},\\
    \lambda_{ji}(x,r) &= \frac{1}{\tanh r}>0,\quad\quad \forall\, 3\leq i\leq n \,\text{ and }\, i\neq j.
\end{align*}
$\lambda_{j2}(x,r)$ is the only one that is possibly non-positive. 
The assumption $\lambda_j(x)< 1$ yields that \begin{align*}
    & \lambda_{j1}(x,r)+\lambda_{j2}(x,r)=\frac{2(1-\lambda_j^2(x))\tanh{r}}{1-\lambda_j^2(x)\tanh^2{r}}>0,\\
    & \lambda_{j2}(x,r)+\lambda_{ji}(x,r)=\frac{\tanh^2{r}-2\lambda_j(x)\tanh{r}+1}{(1-\lambda_j(x)\tanh{r})\tanh{r}}>\frac{1-\lambda_j(x)\tanh{r}}{\tanh{r}}>0, 
\end{align*}
where $3\leq i\leq n$ and $i\neq j$.
Therefore, for any $r>0$, $H^r$ is strictly two-convex. Furthermore, when $r>\tanh^{-1}\frac{|A|^2_{L^\infty(D)}}{2}$, we have $r>\tanh^{-1} |\lambda_j|_{L^\infty(D)}$ for all $3\leq j\leq n$, then all the principal curvatures of $H^r$ are positive, thus $H^r$ is strictly convex and it bounds inside the convex hull of $\gamma$.

Now let $D'$ be any other minimal surface with $\partial_\infty D'=\gamma$, and let $R>0$ be the supremum of $r$ such that $D'$ intersects $H^r$. From \cite{Anderson}, $D'$ lies in the convex hull of $\gamma$, then $R$ cannot exceed the finite number $\tanh^{-1}\frac{|A|^2_{L^\infty(D)}}{2}$. If the supremum is attained on $D'$, then $D'$ is tangent to the two-convex hypersurface $H^R$, which contradicts the maximum principle. 

Otherwise, if the supremum $R$ is not attained. We can take a sequence $x_i'\in D'\cap H^{r(x_i')}$, such that $r(x_i')\rightarrow R$ as $i\rightarrow \infty$. For each $x_i'$, there exists an isometry $T_i$ of $\mathbb{H}^n$ sending $x_i'$ to the origin. In the meantime, since $\gamma$ is a quasicircle, there is a quasiconformal map $\phi_i$ on $S_\infty^{n-1}$ that maps a round circle to $T_i(\gamma)$.
And since the convex hull of $T_i(\gamma)$ contains the origin, there exist three distinct points $x,y,z$ on the round circle, so that
$\phi_i(x), \phi_i(y), \phi_i(z)$ are at a uniformly positive distance from one another, it then follows from the compactness theorem of quasiconformal maps that after passing to a subsequence, $\phi_i$ converges uniformly to a quasiconformal map, and $T_i(\gamma)$ converges to a quasicircle $\gamma_\infty$ in Hausdorff topology (see, for instance, page 25 in \cite{Kapovich}).
Moreover, 
after passing to a subsequence, $T_i(D')$ converges as varifolds to $D'_\infty$ with $\partial_\infty D'_\infty=\gamma_\infty$. We can further assume that $T_i(D)$ converges to $D_\infty$ with $\partial_\infty D_\infty=\gamma_\infty$ (since the points $x_i$, which are the normal projections of $x_i'$ to $D$, are mapped into a compact region by $T_i$, we can take further isometries sending $T_i(x_i)$ to the origin and repeat the same procedure). Standard compactness theorem implies that $T_i(D)$ converges graphically, thus the limit $D_\infty$ is still an embedded minimal surface with $|A|^2_\infty<2$. Denote by $H_\infty^r$ the hypersurface at the fixed distance $r$ to $D_\infty$, then $\{H_\infty^r\}_{r>0}$ foliates $\mathbb{H}^n\setminus D_\infty$. Moreover, $D'_\infty$ is tangent to the two-convex hypersurface $H^R_\infty$, but this violates the maximum principle.

\end{proof}

\subsection{Absence of branch points}\label{absence}

From now on, we consider a sequence $\Pi_i\in S(M,g_i,\frac{1}{i})$, and let $S_i$ be a minimal surface in the homotopy class $\Pi_i$ such that $\text{area}(S_i)=\text{area}(\Pi_i)$. We denote by $\gamma_i$ the limit set of $\Pi_i$.

Let $D_i\subset\mathbb{H}^n$ be the area-minimizing surface mod 2 with $\partial_\infty D_i=\gamma_i$. Then due to Theorem 3 of \cite{Almgren66}, $D_i$ is free of branch points. Moreover, after applying an isometry in $Isom(\mathbb{H}^n)$ and passing to a subsequence, $\gamma_i$ converges to a round circle $\gamma$ in Hausdorff topology. From Corollary 2.5 in \cite{Bangert-Lang}, for any $R>0$, there exists a constant $C_R$ depending only on $n$ and $R$, such that \begin{equation}\label{area_bound}
    \text{area}(D_i\cap B_R(0))\leq C_R\quad\forall i\geq 1.
\end{equation}
It follows that $D_i$ converges as varifolds to $V$, which is also area-minimizing mod 2 (see 34.5 and 42.7 in \cite{GMT_Simon}). 
By Lemma \ref{lem_barrier}, $V$ is either empty or contained in the totally geodesic disc $D$ with $\partial_\infty D=\gamma$.

Furthermore, Alexander duality indicates that there exists a $(n-2)$-dimensional submanifold $K\subset\mathbb{H}^n$, such that the boundary $\partial K$ lies in the complement of a tubular neighborhood of $\gamma$ in $S_\infty^{n-1}$, and it is linked with every $\gamma_i$ for large enough $i$. So every $D_i$ intersects $K$. Additionally, according to Lemma \ref{lem_barrier}, there exists a radius $R_0>0$, such that \begin{equation}\label{h2}
    C(\gamma_i)\cap K\subset B_{R_0}(0),\quad\forall i>>1,
\end{equation}
where $C(\gamma_i)$ represents the convex hull of $\gamma_i$.
For any $R>R_0$ and any point $x_i\in D_i\cap B_{R_0}(0)$, we have $B_{R-R_0}(x_i)\subset B_R(0)$.  
Then (\ref{h2}), together with the monotonicity formula in \cite{Bangert-Lang}, produces a uniform constant $c_{R-R_0}>0$, such that \begin{equation}\label{lower_bound}
    \text{area}(D_i\cap B_{R}(0))\geq c_{R-R_0}.
\end{equation}
It implies that $V$ is non-empty. Thus by constancy theorem (41.1 in \cite{GMT_Simon}), $V$ is a positive multiple of $D$. And since $V$ is area-minimizing mod 2, the multiplicity has to be one.

Moreover, Allard regularity theorem (see \cite{Allard}, or Theorem 1.1 in \cite{White} for an easy version) indicates that the convergence is smooth on compact sets, and we obtain that $|A|^2_{L^\infty_{loc}(D_i)}\rightarrow 0$. 
Finally, from Lemma \ref{uniqueness}, whenever $i$ is sufficiently large, the lift of $S_i$ must coincide with $D_i$, so we finish the proof of (\ref{A}).

\section{Entropy of hyperbolic metric}\label{section3}
Let $s(M,g)$ denote the cardinality of $S(M,g)$, and $s(M,g,\epsilon)$ denote the cardinality of the subset $S(M,g,\epsilon)\subset S(M,g)$. We prove the following inequality in this section. 
\begin{equation}\label{s}
   (c_1g)^{2g}\leq s(M,g,\epsilon)\leq s(M,g)\leq (c_2g)^{2g}.
\end{equation}

\subsection{The upper bound of (\ref{s})}\label{section_upper}
When $n=3$, Kahn-Markovic \cite{Kahn-Markovic} found an upper bound of $s(M,g)$. Their method also applies to the case of $n>3$. We only need the following fact. 

Let $M$ be a closed hyperbolic manifold of dimension $n\geq 3$, and let $S_g$ denote a closed surface of genus $g$. 
For any $\pi_1$-injective immersion $f: S_g\rightarrow M$, there exist a hyperbolic structure on $S_g$ and a homotopy of $f$ that is pleated with respect to this structure, we still denote it by $f$ (see 8.10 in \cite{thurston} for Thurston's original proof for $n=3$, and Lemma 3.6 in \cite{Calegari_scl} for the generalization of all dimensions). 
Furthermore, $s(M,g)$ can be estimated by counting the number of homotopy classes of the pleated immersions. As shown in \cite{Kahn-Markovic}, there exists a constant $c_2$ depending only on the injectivity radius of $M$, so that 
\begin{equation*}
    s(M,g)\leq (c_2g)^{2g}.
\end{equation*}

\subsection{The lower bound of (\ref{s})}\label{section_lower}
Suppose that $M$ has odd dimension $n\geq 3$.
According to \cite{Hamenstadt}, for any small number $\epsilon'>0$, there is an essential surface in $M$ which is sufficiently well-distributed and $(1+\epsilon')$-quasigeodesic, namely, the geodesics on the surface with respect to intrinsic distance are $(1+\epsilon',\epsilon')$-quasigeodesics in $M$. This determines a quasi-isometry that embeds $\mathbb{H}^2$ into $\mathbb{H}^n$, whose boundary extends to a quasisymmetry $f_1: S^1_\infty\rightarrow S^{n-1}_\infty$. 
Pick two discs $D_1, D_2\subset S^{n-1}_\infty$ with $\partial D_1=\partial D_2= S^1_\infty$ and $D_1\cup D_2= S^2_\infty$.
By \cite{Tukia-Vaisala}, $f_1$ extends to quasiconformal maps from $D_i$ into $S^{n-1}_\infty$, $i=1,2$, so there exists a quasiconformal extension $f_2: S^2_\infty\rightarrow S^{n-1}_\infty$. Repeating this process,
we can find a quasiconformal extension $f_{n-1}: S^{n-1}_\infty\rightarrow S^{n-1}_\infty$. Moreover, it has dilatation $1+\epsilon$, where $\epsilon$ depends on $n$ and $\epsilon'$, and $\epsilon\rightarrow 0$ as $\epsilon'\rightarrow 0$. For this reason, we denote this essential surface by $\Sigma_\epsilon$. So for any $\epsilon>0$, we can choose a sufficiently small $\epsilon'$ to build an essential surface $\Sigma_\epsilon\subset M$ associated with an element in $S_\epsilon(M)$.
Let $G(M,g,\epsilon)$ denote the subset of $S(M,g,\epsilon)$ consisting of homotopy classes of finite covers of $\Sigma_\epsilon$ that have genus at most $g$. Counting the commensurability classes in $G(M,g,\epsilon)$ and using M{\"u}ller-Puchta's formula (see \cite{Kahn-Markovic}), we obtain the following lower bound when $g$ is large. \begin{equation*}
    s(M,g,\epsilon)\geq \#G(M,g,\epsilon)\geq (c_1g)^{2g},
\end{equation*} 
where $c_1$ is a constant that depends only on $M$ and $\epsilon$.

Moreover, let $S_i$ denote the minimal representative in the homotopy class $\Pi_i\in G(M,g_i,\frac{1}{i})$, then it is homotopic to a $(1+\frac{1}{i})$-quasigeodesic surface $\Sigma_i$. 
Assume that $\mu_i,\nu_i$ represent the Radon measures induced by integration over $S_i$ and $\Sigma_i$, respectively.
From Lemma 4.3 of \cite{Calegari-Marques-Neves}, we have \begin{equation}\label{measure}
    \underset{i\rightarrow\infty}{\lim}\mu_i=\underset{i\rightarrow\infty}{\lim}\nu_i=:\nu,
\end{equation}
Furthermore, following the argument on page 16 in \cite{Calegari-Marques-Neves}, we conclude that the measure $\nu$ is positive on any non-empty open set of $M$. The proof makes use of the property that $\Sigma_i$ is nearly equidistribued in $M$ (see $\mathsection 7$ of \cite{Hamenstadt}), and the estimates hold for all odd ambient dimensions. This measure $\nu$ plays an important role in the proof of the rigidity in Section \ref{section_rigidity}.

\subsection{Computation of the entropy}\label{subsection_b}
We prove $E(h_{hyp})=2$ first. 
Given $\eta>0$, for all sufficiently small $\epsilon$, and sufficiently large $L$ which only depend on $\eta$, we conclude from (\ref{A}) that for $\Pi\in S_\epsilon(M)$, if it has $\text{area}(\Pi)\leq 4\pi(L-1)$, then $\Pi\in S(M,\floor*{(1+\eta)L},\epsilon)$. 
On the other hand, $\Pi\in S(M,\floor*{(1-\eta)L},\epsilon)$ imples that $\text{area}(\Pi)\leq 4\pi(L-1)$.
Then consequently, \begin{align*}
    2(1-\eta)&\leq\underset{L\rightarrow\infty}{\lim\inf}\frac{\ln{s(M,\floor*{(1-\eta)L},\epsilon)}}{L\ln{L}}\\
    &\leq\underset{L\rightarrow\infty}{\lim\inf}\dfrac{\ln\#\{\text{area}_h(\Pi)\leq 4\pi(L-1):\Pi\in S_\epsilon(M)\}}{L\ln L}\\
    &\leq\underset{L\rightarrow\infty}{\lim\inf}\frac{\ln{s(M,\floor*{(1+\eta)L},\epsilon)}}{L\ln{L}}\leq 2(1+\eta).
\end{align*}
It follows directly that $E(h_{hyp})=2$.

\section{Proof of rigidity in Theorem \ref{main}}\label{section_rigidity}
The key fact to show the rigidity is the following, which is an extension of Theorem 5.1 in \cite{Calegari-Marques-Neves} to higher dimensional ambient manifolds. 

\begin{Th}\label{5.1}
Assume $M$ is defined as above. Let $S_i$ be the essential surface immersed in $M$ that minimizes area of a surface subgroup $\Pi_i <\Gamma$ in $G(M,g_i,\frac{1}{i})$ with respect to the hyperbolic metric $h_{hyp}$. And let $\Sigma_i$ be the essential surface (possibly with branch points) homotopic to $S_i$ that minimizes area with respect to the metric $h$. Then 
\begin{equation}\label{area}
    \underset{i\rightarrow\infty}{\lim\sup}\dfrac{\text{area}_h(\Sigma_i)}{\text{area}(S_i)}\leq 1.
\end{equation}
If the equality holds, then $h$ is hyperbolic and isometric to $h_{hyp}$.
\end{Th}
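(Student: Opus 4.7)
The plan is to control both areas via Gauss--Bonnet, then to extract rigidity in the equality case from a Grassmannian equidistribution argument. By Section \ref{section2}, every $\Pi_i \in G(M, g_i, 1/i) \subset S_{1/i}(M)$ satisfies the hypothesis of (\ref{A}), so the smooth minimal representative $S_i$ in $h_{hyp}$ has $\||A|^2\|_{L^\infty(S_i)} = o(1)$. The Gauss equation $K_{S_i} = -1 - \tfrac{1}{2}|A|^2$ together with Gauss--Bonnet gives
\begin{equation*}
\text{area}(S_i) = 4\pi(g_i-1) - \tfrac{1}{2}\int_{S_i} |A|^2\, dA = 4\pi(g_i-1)\bigl(1 + o(1)\bigr).
\end{equation*}
For $\Sigma_i$ (the $h$-minimal representative, possibly branched), the Gauss equation $K^h_{\Sigma_i} = \sec_h(T\Sigma_i) - \tfrac{1}{2}|A|^2_h$ together with $\sec_h \leq -1$ and the negative cone-angle contribution at branch points yields
\begin{equation*}
\text{area}_h(\Sigma_i) \leq 4\pi(g_i-1) + \int_{\Sigma_i}\bigl(\sec_h(T\Sigma_i) + 1\bigr)\, dA_h - \tfrac{1}{2}\int_{\Sigma_i} |A|^2_h\, dA_h,
\end{equation*}
whose last two terms are non-positive, so $\text{area}_h(\Sigma_i) \leq 4\pi(g_i-1)$. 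Combining the two estimates, $\text{area}_h(\Sigma_i)/\text{area}(S_i) \leq 1/(1 + o(1)) \to 1$, which is (\ref{area}).

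Suppose now that equality holds in the limsup. Passing to a subsequence, both normalized defects vanish:
\begin{equation*}
\frac{1}{\text{area}_h(\Sigma_i)}\int_{\Sigma_i} \bigl(-\sec_h(T\Sigma_i) - 1\bigr)\, dA_h \to 0, \quad \frac{1}{\text{area}_h(\Sigma_i)}\int_{\Sigma_i} |A|^2_h\, dA_h \to 0.
\end{equation*}
I lift each $\Sigma_i$ to the Grassmann bundle $G_2(M)$ via $x \mapsto (x, T_x\Sigma_i)$, obtaining normalized probability measures $\tilde\eta_i$, and extract a weak-$*$ limit $\tilde\eta$. Continuity and non-positivity of $\sec_h + 1$, combined with the first displayed limit, force $\sec_h \equiv -1$ on $\mathrm{supp}(\tilde\eta)$. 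Once it is shown that $\mathrm{supp}(\tilde\eta) = G_2(M)$, continuity of $\sec_h$ yields $\sec_h \equiv -1$ throughout $M$, so $h$ is a hyperbolic metric of curvature $-1$, and Mostow rigidity ($n \geq 3$) identifies it with $h_{hyp}$ up to isometry.

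The main obstacle is the full-support claim $\mathrm{supp}(\tilde\eta) = G_2(M)$. The analogue in \cite{Calegari-Marques-Neves} for $n = 3$ exploits that the fibres of $G_2(M)$ are one-dimensional and that the Hamenst\"adt surfaces can be rotated by ambient isometries to reach every tangent direction; combined with $\mathrm{supp}(\nu) = M$ recalled in Section \ref{section_lower}, this yields full Grassmannian support. For $n \geq 4$ the fibres of $G_2(M)$ are the higher-dimensional Grassmannians $G_2(\mathbb{R}^n)$, and one must refine Hamenst\"adt's construction by varying both the base surface $\Sigma_\epsilon$ and the choice of finite cover inside $G(M, g_i, \epsilon)$ so that the tangent $2$-planes approximate every element of $G_2(M)$ in the weak-$*$ limit, with error vanishing as $\epsilon \to 0$. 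Granting this, the rigidity argument closes as above.
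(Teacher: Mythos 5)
The inequality half of your argument is essentially the paper's: both use the generalized Gauss--Bonnet formula (with the branch-point defect) to show $\text{area}_h(\Sigma_i)\leq 4\pi(g_i-1)$, while Section \ref{section2} gives $\text{area}(S_i)=4\pi(g_i-1)(1+o(1))$. That part is fine.

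The rigidity half has a genuine gap, which you yourself flag: you reduce everything to the claim $\mathrm{supp}(\tilde\eta)=G_2(M)$ and then say one should ``refine Hamenst\"adt's construction'' so the tangent $2$-planes equidistribute. That is not what the paper does, and it is not clear how to make it work. Hamenst\"adt's construction gives equidistribution of the surfaces in $M$ (the measure $\nu$ of (\ref{measure}) has full support), but it does not control how the tangent $2$-planes of the $h$-minimal representatives $\Sigma_i$ distribute in the Grassmann bundle; a limit measure on $G_2(M)$ built from the $\Sigma_i$ has no a priori reason to have full support. The paper avoids proving such an equidistribution statement altogether. Instead it (i) introduces the set $\mathscr{L}$ of round circles arising as rescaled limits of the $\Lambda(\phi_i\Pi_i\phi_i^{-1})$ and, via fact $(\star)$ and the homogeneous-dynamics results of Shah and Mozes--Shah (Lemmas \ref{4.2}--\ref{4.3}), shows $\mathscr{L}=\mathscr{C}$, so some $\gamma\in\mathscr{L}$ has dense $\Gamma$-orbit; (ii) proves (Lemma \ref{4.4}) that a disc component $\Omega^0_i\subset\Omega_i$ converges smoothly to an $h$-totally geodesic disc $\Omega$ of curvature $-1$ with $\partial_\infty\Omega=\gamma$; (iii) transfers the density of $\Gamma D(\gamma)$ in $S_1M(h_{hyp})$ to density of $\Gamma\Omega$ in $S_1M(h)$ via Gromov's geodesic conjugacy \cite{Gromov2000}; and (iv) invokes Brin--Gromov's ergodicity of the $2$-frame flow on odd-dimensional negatively curved manifolds \cite{Brin-Gromov} to upgrade density in the unit tangent bundle to density in $Gr_2(M)$, whence $\sec_h\equiv -1$ everywhere and Mostow rigidity finishes.

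Step (iv) is also why the main theorem is stated for odd $n$: the $2$-frame flow need not be ergodic in even dimensions. Your proposal does not engage with this restriction at all, which is another sign that the route you sketch is not the one that actually closes the argument. The missing ingredients you would need to supply are precisely Lemmas \ref{4.2} and \ref{4.4}, the Gromov boundary-rigidity transfer, and the Brin--Gromov $2$-frame ergodicity; ``refining Hamenst\"adt's construction'' does not substitute for these.
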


\subsection{Proof of Theorem \ref{5.1}}
Let's prove the inequality first. $\Sigma_i$ may have isolated branch points when $n\geq 4$, we denote by $P_i$ the locus of branch points, and by $d_{i_j}$ the order of each branch point $p_{i_j}\in P_i$. Then a generalized Gauss-Bonnet theorem in \cite{Fang96} states that \begin{equation}\label{generalizedgauss}
    \text{area}_h(\Sigma_i)=4\pi(g_i-1)+\int_{\Sigma_i\setminus P_i}(K_{12}+1)dA_h-\frac{1}{2}\int_{\Sigma_i\setminus P_i}|A|^2dA_h-2\pi\sum_jd_{i_j}.
\end{equation}
On the other hand, we have shown in Section \ref{section2} that $S_i$ has no branch points and satisfies that $|A|^2_{L^\infty(S_i)}\rightarrow 0$ as $i\rightarrow \infty$, so for sufficiently large $i$, $\text{area}(S_i)\simeq 4\pi(g_i-1)$, and thus inequality (\ref{area}) follows.

Now suppose the equality (\ref{area}) holds, it yields that \begin{equation*}
    \lim_{i\rightarrow\infty}\frac{1}{\text{area}_h(\Sigma_i)}\int_{\Sigma_i}\big(-(K_{12}+1)+\frac{1}{2}|A|^2+2\pi\sum_jd_{i_j}\delta_{p_{i_j}}\big)dA_h=0,
\end{equation*}
where $\delta_{p_{i_j}}(x)=\delta(x-p_{i_j})$ and $\delta(x)$ is the Dirac delta function.

Let $\mathscr{C}$ be the set of all round circles in $S^{n-1}_\infty$, and define \begin{align*}
\mathscr{L}=&\{\gamma\in\mathscr{C}:\exists\phi_i\in F_i(\epsilon_i,R_i), \epsilon_i\rightarrow 0, R_i\rightarrow\infty,\text{ such that}\\
& \text{after passing to subsequence, }\Lambda(\phi_i\Pi_i\phi_i^{-1})\text{ converges to }\gamma\},
\end{align*} in which \begin{equation}\label{f}
    F_i(\epsilon,R)=\{\phi\in\Gamma:\int_{\phi(\Tilde{\Sigma}_i)\cap B_R(0)}\big(-(K_{12}+1)+\frac{1}{2}|A|^2+2\pi\sum_jd_{i_j}\delta_{q_{i_j}}\big)dA_h\leq\epsilon\},
\end{equation}
where $q_{i_j}$ is a branch point with order $d_{i_j}$, the locus of branch points in $\Tilde{\Sigma}_i$ is denoted by $Q_i$, and $\delta$ represents the Dirac delta function. It's not hard to see that $\mathscr{L}$ is closed and $\Gamma$-invariant. Due to Lemma 5.2 in \cite{Shah2}, almost every element in $\mathscr{C}$ has a dense $\Gamma$-orbit. And for the elements in the subset $\mathscr{L}\subset\mathscr{C}$, we prove the following lemma.

\begin{Lemma}\label{4.2}
There is a round circle $\gamma\in\mathscr{L}$, such that $\Gamma\gamma$ is dense in $\mathscr{C}$. Additionally, it deduces a stronger result that $\mathscr{L}=\mathscr{C}$. Therefore, by \cite{Shah2}, almost every round circle in $\mathscr{L}$ has a dense $\Gamma$-orbit. 
\end{Lemma}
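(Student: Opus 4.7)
The plan is to use the rigidity identity to show that ``flat'' translates $\phi(\tilde\Sigma_i)$ are abundant, combine this with equidistribution of $\Sigma_i$ not just in $M$ but in the $2$-Grassmannian bundle $G_2(TM)$, and thereby realize every round circle bounding a totally geodesic plane through a prescribed base point as an element of $\mathscr{L}$. The conclusion $\mathscr{L}=\mathscr{C}$ will then follow from Shah's density theorem together with the fact that $\mathscr{L}$ is closed and $\Gamma$-invariant by construction.

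The vanishing identity displayed just before (\ref{f}) says that the integral of the nonnegative integrand $-(K_{12}+1)+\tfrac12|A|^2+2\pi\sum_j d_{i_j}\delta_{p_{i_j}}$ against $dA_h$ on $\Sigma_i$ is $o(\text{area}_h(\Sigma_i))$. Lifting to the universal cover and applying a Chebyshev-type estimate, for every fixed $\epsilon>0$ and $R>0$ the $\mu_i$-mass carried by translates $\phi\in\Gamma\setminus F_i(\epsilon,R)$ is asymptotically negligible compared to $\text{area}(S_i)$; equivalently, the overwhelming majority of translates of $\tilde\Sigma_i$ are nearly totally geodesic and branch-point-free inside $B_R(0)$. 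Combining this with the positivity of the limiting measure $\nu$ on $M$ from Section~\ref{section3}, enhanced to a measure $\hat\nu$ on $G_2(TM)$ positive on every non-empty open set by invoking Hamenst\"adt's directional well-distribution of $\Sigma_\epsilon$ (\cite{Hamenstadt}, $\mathsection 7$), one can, for any prescribed $2$-plane $P\subset T_0\mathbb{H}^n$, find $\phi_i\in F_i(\epsilon_i,R_i)$ with $\epsilon_i\to 0$ and $R_i\to\infty$ such that $\phi_i(\tilde\Sigma_i)$ passes near the origin with tangent plane tending to $P$.

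On $B_{R_i}(0)$ the translate $\phi_i(\tilde\Sigma_i)$ has no branch points and $|A|^2\to 0$, so by the Allard regularity theorem used in Section~\ref{section2} it converges smoothly on compact sets to the totally geodesic $2$-plane through the origin tangent to $P$. Its boundary at infinity is the round circle $\gamma_P:=\partial_\infty\exp(P)$, and the quasiconformal control packaged into the $(1+\epsilon_i)$-quasicircle assumption $\Pi_i\in G(M,g_i,\tfrac{1}{i})$ forces $\Lambda(\phi_i\Pi_i\phi_i^{-1})\to\gamma_P$ in Hausdorff topology, giving $\gamma_P\in\mathscr{L}$. Choosing $P$ so that $\gamma_P$ lies in the full-measure set of circles with dense $\Gamma$-orbit (Lemma~5.2 of \cite{Shah2}) yields $\gamma\in\mathscr{L}$ with $\overline{\Gamma\gamma}=\mathscr{C}$. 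Since $\mathscr{L}$ is manifestly closed and $\Gamma$-invariant, $\mathscr{L}\supseteq\overline{\Gamma\gamma}=\mathscr{C}$, and hence $\mathscr{L}=\mathscr{C}$. The final assertion about almost every circle in $\mathscr{L}$ having dense $\Gamma$-orbit is then immediate from Shah.

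The main obstacle is the directional enhancement of the equidistribution: Section~\ref{section3} only produces $\nu$ on $M$, whereas the argument requires a positive limiting measure on $G_2(TM)$, since otherwise one only recovers round circles through a fixed set of points rather than a $\Gamma$-dense subset of $\mathscr{C}$. A secondary technical point is to verify that smooth interior convergence of $\phi_i(\tilde\Sigma_i)$ to a totally geodesic plane upgrades to Hausdorff convergence of the asymptotic limit sets $\Lambda(\phi_i\Pi_i\phi_i^{-1})$ to $\gamma_P$; this is where the uniform quasicircle hypothesis and the compactness theorem for quasiconformal maps already exploited in Lemma~\ref{uniqueness} re-enter the argument.
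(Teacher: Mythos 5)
Your proposal takes a genuinely different route from the paper, but it contains a gap that you yourself flag and do not close, so as written it does not constitute a complete proof.

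The paper proves Lemma~\ref{4.2} by contradiction. Assuming no $\gamma\in\mathscr{L}$ has a dense $\Gamma$-orbit, Shah's dichotomy forces the projection of each $D(\gamma)$, $\gamma\in\mathscr{L}$, to be dense in a \emph{proper} totally geodesic submanifold of $M$. The real workhorse is Lemma~\ref{4.3}: an induction on codimension using the Mozes--Shah / Lee--Oh equidistribution theorems shows that the resulting family $\mathscr{P}$ of proper totally geodesic submanifolds sits inside a \emph{finite} union of proper submanifolds. One can then place a compact set $K\subset\Delta$ away from that finite union, contradicting (\ref{star}). Crucially, (\ref{star}) needs only that $\nu$ is a positive measure on open subsets of $M$; no directional information is required. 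The conclusion $\mathscr{L}=\mathscr{C}$ then follows from closedness and $\Gamma$-invariance, exactly as you say.

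Your proposal bypasses Lemma~\ref{4.3} entirely and instead tries to produce, for every prescribed $2$-plane $P$ at the origin, a circle $\gamma_P\in\mathscr{L}$ with $D(\gamma_P)$ tangent to $P$. This would indeed give a slicker, more constructive proof if the input were available. But it requires a positive limiting measure $\hat\nu$ on the Grassmannian bundle $G_2(TM)$, whereas Section~\ref{section3} only delivers $\nu$ on $M$. The Grassmannian enhancement does \emph{not} follow from the $M$-positivity of $\nu$, nor from a citation of Hamenst\"adt's well-distribution in the form used on page~16 of \cite{Calegari-Marques-Neves}; and the frame-flow ergodicity of \cite{Brin-Gromov} is invoked in the paper only \emph{after} $\mathscr{L}=\mathscr{C}$ has been established, to propagate from one totally geodesic disc to a dense set of them, not to manufacture the first one. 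The paper's contradiction-plus-homogeneous-dynamics route is structured precisely to avoid needing directional equidistribution of the approximating surfaces. Until you supply an actual proof that $\hat\nu$ is positive on open sets of $G_2(TM)$, the key step of your argument is missing, and the proposal should be regarded as incomplete rather than as a valid alternative proof.
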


\begin{proof}
Theorem 6.1 of \cite{Calegari-Marques-Neves} proved the following fact using the measure $\nu$ defined in (\ref{measure}).
 \begin{align}\label{star}\tag{$\star$}
     &\text{For any $n$-dimensional compact subset $K$ of $\mathbb{H}^n$, there exists $\gamma\in\mathscr{L}$, such that}\\
     &\text{the unique totally geodesic disc $D(\gamma)$ in $\mathbb{H}^n$ bounded by $\gamma$ intersects $K$. }\nonumber
 \end{align}

Now suppose by contradiction that $\mathscr{L}$ has no element with a dense $\Gamma$-orbit in $\mathscr{C}$. According to Shah's result \cite{Shah}, for each $\gamma\in\mathscr{C}$, the projection of the $\Gamma$-orbit of $D(\gamma)$ is either dense in $M$, or it is dense in a finite union of closed totally geodesic submanifolds in $M$ of codimensions $1\leq k\leq n-2$. So for $\gamma\in\mathscr{L}$, such submanifolds are proper.
If the number of elements $\gamma\in\mathscr{L}$ so that the corresponding $D(\gamma)$ intersects $\Delta$ were finite, where $\Delta$ denote the fundamental domain of $M$, then the union of $D(\gamma)$ for all $\gamma\in\mathscr{L}$ would meet $\Delta$ in a finite subset.
Therefore, there should have been a compact subset $K\subset\Delta$ never intersecting any $D(\gamma)$, but this case can be excluded by (\ref{star}).

It turns out that $\Delta$ meets infinitely many $D(\gamma)$ for $\gamma\in\mathscr{L}$. By assumption, none of such elements $\gamma$ have dense $\Gamma$-orbits in $\mathscr{C}$, and thus the closures of the projections of these $D(\gamma)$'s in $M$ are infinitely many proper totally geodesic submanifolds, the set of such manifolds is denoted by $\mathscr{P}$.
For any $1\leq k\leq n-2$, let the subset $\mathscr{P}_k\subset\mathscr{P}$ consist of all totally geodesic submanifolds of codimensions $k$, and let $\mathscr{L}_k$ represent the collection of all $\gamma\in\mathscr{L}$ whose corresponding projection of $D(\gamma)$ in $M$ is dense in at least one of the submanifolds in $\mathscr{P}_k$. 

\begin{Lemma}\label{4.3}
All the elements in $\mathscr{P}$ are contained in a finite union of proper submanifolds of $M$.
\end{Lemma}
Let's consider an example first.
\begin{Ex}
When $n=5$, since $\mathscr{P}=\mathscr{P}_1\cup\mathscr{P}_2\cup\mathscr{P}_3$ has infinitely many candidates, we have the following cases. \begin{enumerate}[(1)]
    \item If $\mathscr{P}_1$ were infinite, then these totally geodesic submanifolds of codimension $1$ would be obviously maximal. By Corollary 1.5 of \cite{mozes_shah_1995} (or Theorem 1.7(1) of \cite{Lee-Oh}), any infinite sequence of maximal properly immersed totally geodesic submanifolds becomes dense in $M$. Thus, we could pick an infinite sequence $\{\gamma_i\}$ in $\mathscr{L}_1$,
    and then the limit of $\overline{\Gamma\gamma_i}$ should have been dense in $\mathscr{C}$. Since $\mathscr{L}$ is closed and $\Gamma$-invariant, we conclude that $\mathscr{L}=\mathscr{C}$, violating our assumption. It yields that $\mathscr{P}_1$ must be finite.
    
    \item Suppose further that $\mathscr{P}_2$ is infinite. If $\mathscr{P}_2$ contained infinitely many maximal submanifolds of $M$, then the argument in (1) could apply. So we only need to consider the situation where all but finitely many elements in $\mathscr{P}_2$ are non-maximal, denoted by $P_1, P_2,\cdots$.
    By Corollary 1.5 of \cite{mozes_shah_1995} (or Theorem 14.1(3) of \cite{Lee-Oh}), since the limit of $P_i$ doesn't become dense in $M$, any infinite subsequence of $\{P_i\}$ must have a further infinite subsequence $\{P_{j,i}\}$ contained in a proper totally geodesic submanifold $\overline{P}_j\subset M$ of higher dimension, so $\overline{P}_j$ must have codimension $1$. 
    All elements of $\{P_{j,i}\}$ are maximal submanifolds of $\overline{P}_j$, thus the limit of the sequence is dense in $\overline{P}_j$. Accordingly, there is a sequence $\{\gamma_{j,i}\}$ in $\mathscr{L}_2$, such that the closure of $\underset{i\rightarrow\infty}{\lim}\,\overline{\Gamma\gamma_{j,i}}$ contains all circles in $\mathscr{C}$ that lie in $\partial_\infty \widetilde{\overline{P}_j}\approx S^{3}$, where $\widetilde{\overline{P}_j}$ is a lift of $\overline{P}_j$ in $\mathbb{H}^5$.
    It's worth noting that
    almost every element among these circles has a dense orbit in $\partial_\infty \widetilde{\overline{P}_j}$ (Lemma 5.2, \cite{Shah2}).
    Then because of the closedness and $\Gamma$-invariance of $\mathscr{L}$,
    there exists $\overline{\gamma}_j\in\mathscr{L}$, so that the projection of $D(\overline{\gamma}_j)$ in $M$ is dense in $\overline{P}_j$. 
    In other words, we have $\overline{\gamma}_j\in\mathscr{L}_1$ and $\overline{P}_j\in \mathscr{P}_1$.

    Notice that $\mathscr{P}_1$ is a finite set, 
    we can only extract finitely many subsequences $\{P_{1,i}\},\cdots,\{P_{l,i}\}\subset\{P_i\}$ to build $\overline{P}_1,\cdots,\overline{P}_l\in\mathscr{P}_1$, the number of the remaining elements in $\mathscr{P}_2$ is finite.
    We see that $\mathscr{P}_1\cup\mathscr{P}_2$ is contained in a finite union of proper submanifolds.
    
    \item If $\mathscr{P}_1,\mathscr{P}_2$ are both finite, then $\mathscr{P}_3$ must be infinite. It suffices to assume that all but finitely many candidates in $\mathscr{P}_3$ are non-maximal, and they are denoted by $\{P_i\}$. According to \cite{mozes_shah_1995} or \cite{Lee-Oh}, any infinite subsequence has a further subsequence which is contained in a totally geodesic submanifold of codimension $1$ or $2$. 
    
    Pick a subsequence $\{P_{1,i}\}$ of $\{P_i\}$, so that the elements are contained in a submanifold $\overline{P}_1$ of the maximal codimension $1\leq m_1\leq 2$. Then as $i\rightarrow\infty$, $P_{1,i}$ becomes dense in $\overline{P}_1$, because otherwise, the argument in \cite{mozes_shah_1995} or \cite{Lee-Oh} yields a further infinite subsequence lying in a proper submanifold of $\overline{P}_1$, but it violates the assumption that $\overline{P}_1$ attains the maximal codimension. 
    Similarly, the density ensures that $\overline{P}_1\in \mathscr{P}_{m_1}\subset \mathscr{P}_1\cup\mathscr{P}_2$.

    Furthermore, if the number of candidates in $\{P_i\}$ intersecting $(M\setminus \overline{P}_1)$ is finite, we deduce that $\mathscr{P}$ can be represented by a finite union of submanifolds. Otherwise, we continue to extract an infinite subsequence $\{P_{2,i}\}$ of $\{P_i\}$ meeting $M\setminus\overline{P}_1$, which is contained in a proper submanifold $\overline{P}_2\subset M$ of the maximal codimension $1\leq m_2\leq m_1\leq 2$. Similarly, the maximality makes the limit of $P_{2,i}$ dense in $\overline{P}_2$, and therefore $\overline{P}_2\in\mathscr{P}_{m_2}\subset\mathscr{P}_1\cup\mathscr{P}_2$. 
    
    Finally, since $\mathscr{P}_1\cup\mathscr{P}_2$ is finite, we can only find finitely many closures $\overline{P}_1,\overline{P}_2,\\\cdots,\overline{P}_l$, and the number of elements in $\{P_i\}$ intersecting $M\setminus(\underset{j=1}{\overset{l}{\cup}}\overline{P}_j)$ is finite. Thus, $\mathscr{P}$ is contained in a finite union of proper submanifolds. 
    \end{enumerate}
\end{Ex}

The same method applies to any ambient dimensions, so similarly, we prove by induction that for each $1\leq k\leq n-2$, $\underset{j\leq k}{\cup}\mathscr{P}_j$ is contained in a finite union of proper submanifolds of $M$, this implies Lemma \ref{4.3}.

Now we complete the proof of Lemma \ref{4.2}.
By Lemma \ref{4.3}, all elements in $\mathscr{P}$ lie in a finite union of proper submanifolds of $M$. So there must be a non-empty compact set $K$ in $\Delta\subset\mathbb{H}^n$ away from the fundamental domain of this union, it means that $K$ is disjoint from all $D(\gamma)$ with $\gamma\in\mathscr{L}$, but this contradicts (\ref{star}). Therefore, $\mathscr{L}$ contains an element with a dense $\Gamma$-orbit, and $\mathscr{L}=\mathscr{C}$ follows from the closedness and $\Gamma$-invariance of $\mathscr{L}$.

\end{proof}

Fix a round circle $\gamma\in\mathscr{L}$ that has a dense $\Gamma$-orbit, $\gamma$ can be represented by 
$\underset{i\rightarrow\infty}{\lim}\Lambda(\phi_i\Pi_i\phi_i^{-1})$, where $\phi_i\in F_i(\epsilon_i,R_i)$, as $i\rightarrow \infty$, we have $\epsilon_i\rightarrow 0$ and $R_i\rightarrow\infty$. Let $D_i,\Omega_i$ be the lifts of $S_i,\Sigma_i$ to $B^n$ preserved by $\phi_i\Pi_i\phi_i^{-1}$. We have proved in Section \ref{section2} that after passing to a subsequence, $D_i$ converges to the totally geodesic disc $D=D(\gamma)$. Moreover, 
it follows from (\ref{f}) that \begin{equation*}
    \lim_{i\rightarrow\infty} \int_{\Omega_i\cap B_{R_i}(0)}\big(-(K_{12}+1)+\frac{1}{2}|A|^2+2\pi\sum_jd_{i_j}\delta_{q_{i_j}}\big)dA_h=0.
\end{equation*}
Since $K_{12}\leq -1$, we obtain that \begin{equation}\label{fff}
     \lim_{i\rightarrow\infty} \int_{\Omega_i\cap B_{R_i}(0)}\big(-(K_{12}+1)+\frac{1}{2}|A|^2\big)dA_h=0,
\end{equation}
and \begin{equation*}
    \lim_{i\rightarrow\infty}\int_{\Omega_i\cap B_{R_i}(0)}\sum_jd_{i_j}\delta_{q_{i_j}}dA_h=0.
\end{equation*}
Recall that $Q_i$ represents the set of branch points in $\Omega_i$, the latter equation implies that 
\begin{equation}\label{limitofbranch}
    \#\{Q_i\cap B_{R_i}(0)\}\rightarrow 0\quad\text{as }i\rightarrow\infty.
\end{equation}
So for large enough $i$, $\Omega_i$ has no branch points inside $B_{R_i}(0)$.

\begin{Lemma}\label{4.4}
\emph{There exists a connected component $\Omega^0_i\subset \Omega_i$, so that $\Omega^0_i$ is a disc and it converges smoothly to a totally geodesic hyperbolic disc $\Omega$ with $\partial_\infty\Omega=\gamma$.}
\end{Lemma}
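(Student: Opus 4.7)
My plan is to extract a smooth subsequential limit of $\Omega_i$ from the curvature integral bound (\ref{fff}) and the absence of branch points (\ref{limitofbranch}), and then to identify this limit with the totally geodesic hyperbolic disc $D := D(\gamma)$. The first step is to choose the right component. From Section \ref{section2} we already have $D_i \to D$ smoothly on compact sets, so after possibly normalizing we may assume $D$ passes through the origin and $D_i \cap B_{R_0}(0)$ is nonempty for a fixed $R_0$ and all large $i$. Since both $D_i$ and $\Omega_i$ are lifts preserved by the same subgroup $\phi_i \Pi_i \phi_i^{-1}$, their asymptotic boundaries both equal the common limit set $\Lambda(\phi_i \Pi_i \phi_i^{-1})$, which converges to $\gamma$. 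I would take $\Omega^0_i$ to be the connected component of $\Omega_i \cap B_{R_i}(0)$ whose distance to $D_i \cap B_{R_0}(0)$ tends to zero; existence for large $i$ follows from this common asymptotic behavior.

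Next I would upgrade the $L^2$ decay of $|A|$ to $L^\infty$ decay. The integrand in (\ref{fff}) is nonnegative since $K_{12} \leq -1$, so both $|A|^2$ and $-(K_{12}+1)$ are small in $L^1$ on $B_{R_i}(0)$. Combined with (\ref{limitofbranch}), $\Omega_i$ is smoothly immersed inside $B_{R_i}(0)$ for large $i$, so a Choi--Schoen type $\epsilon$-regularity theorem for two-dimensional minimal surfaces in Riemannian ambients upgrades this to $\|A\|_{L^\infty(\Omega^0_i \cap B_R(0))} \to 0$ on any fixed $R > 0$. The uniform local area bound required by $\epsilon$-regularity comes from monotonicity (a lower density bound) together with an area comparison against $D_i$ through a homotopy. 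With these ingredients, standard smooth compactness for minimal surfaces then yields a $C^\infty_{\mathrm{loc}}$ subsequential limit $\Omega^0_i \to \Omega$.

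To characterize $\Omega$: since $|A| \equiv 0$, $\Omega$ is totally geodesic with respect to $h$, and since $\int -(K_{12}+1)\,dA_h \to 0$ together with $K_{12} \leq -1$ forces $K_{12} \equiv -1$ on $\Omega$, the Gauss equation gives the induced metric constant curvature $-1$, making $\Omega$ intrinsically a hyperbolic disc. Its asymptotic boundary equals $\gamma$ because $\partial_\infty \Omega^0_i = \Lambda(\phi_i \Pi_i \phi_i^{-1}) \to \gamma$ in Hausdorff distance. Now $D(\gamma)$ is the unique totally geodesic disc in $(\mathbb{H}^n, h_{hyp})$ with $\partial_\infty D = \gamma$. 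Combining the smooth convergences $D_i \to D$ and $\Omega^0_i \to \Omega$, the component selection which places $\Omega^0_i$ close to $D_i$, and the fact that $\Sigma_i$ is homotopic to $S_i$, I would conclude that $\Omega$ coincides with $D$. Smooth closeness to $D$ then forces $\Omega^0_i$ to itself be a topological disc for all large $i$.

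The main obstacle I anticipate is the identification $\Omega = D$: a priori $\Omega$ is only totally geodesic for $h$, not for $h_{hyp}$, so Lemma \ref{uniqueness} does not apply directly. Pinning $\Omega$ down will require exploiting the precise relation between $\Omega^0_i$ and $D_i$ from the component selection, together with the observation that the integral bound (\ref{fff}) confines $\Omega^0_i$ to an arbitrarily thin neighborhood of $D$ as $i \to \infty$. A secondary technical challenge is establishing the uniform local area bound on $\Omega^0_i$ without circularity, using only $\sec_h \leq -1$ and the homotopy comparison with $D_i$.
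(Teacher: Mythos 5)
The central missing ingredient in your proposal is the uniform Hausdorff proximity between $D_i$ and $\Omega_i$. You assert that a component $\Omega^0_i$ of $\Omega_i\cap B_{R_i}(0)$ close to $D_i$ exists ``from this common asymptotic behavior,'' but $D_i$ is minimal for $h_{hyp}$ while $\Omega_i$ is minimal for $h$, and there is no a priori reason these two surfaces stay a bounded distance apart. The paper establishes this with a specific chain: $\Omega_i\subset C_h(\Lambda(\phi_i\Pi_i\phi_i^{-1}))$, then Bowditch's Proposition 2.5.4 plus the Morse lemma give a uniform bound on $d_H(C_h(\Lambda),C_{h_{hyp}}(\Lambda))$, and finally the two-convex foliation from Lemma~\ref{uniqueness} bounds $d_H(C_{h_{hyp}}(\Lambda),D_i)$ by $\tanh^{-1}\tfrac{|A|^2_{L^\infty(D_i)}}{2}$. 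Without this, your component selection, your ``thin neighborhood of $D$'' claim, and the area bounds you need for $\epsilon$-regularity are all unsupported.

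A second gap is the identification $\partial_\infty\Omega=\gamma$. You write $\partial_\infty\Omega^0_i=\Lambda(\phi_i\Pi_i\phi_i^{-1})$, but $\Omega^0_i$ is a compact piece and has no asymptotic boundary; the full lift $\Omega_i$ does, but passing from $C^\infty_{\mathrm{loc}}$ convergence on compact sets to control of asymptotic boundaries requires an argument. The paper does this via geodesic arcs $\alpha_i$ (in $(B^n,h)$) and $\beta_i$ (in $\Omega_i$) from $x_i\in\Omega^0_i$ to $y_i\in\Lambda_i$, using the convex hull containment and Bowditch again to keep $\beta_i$ in an $r$-neighborhood of $\alpha_i$, so that both converge to the same geodesic in $\Omega$; this forces $\partial_\infty\Omega\subset\gamma$, hence equality for circles. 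You skip this entirely.

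Finally, the ``main obstacle'' you identify --- proving $\Omega=D(\gamma)$ --- is a misreading: the lemma only asserts $\Omega$ is a totally geodesic hyperbolic disc \emph{for $h$} with $\partial_\infty\Omega=\gamma$. It does not claim $\Omega$ coincides with the $h_{hyp}$-geodesic disc $D(\gamma)$, and indeed these live in different metrics and need not coincide as point sets. The parts of your argument that do align with the paper are the Choi--Schoen-style upgrade from $L^1$ curvature decay to $L^\infty$ (what the paper calls ``small total curvature estimates''), the smooth compactness step, and the characterization of the limit as totally geodesic of sectional curvature $-1$. For the disc topology, the paper's maximum-principle argument (tangency with a larger geodesic sphere if $\Omega^0_i$ were an annulus) is cleaner than your appeal to ``smooth closeness to $D$,'' which only controls $\Omega^0_i$ on fixed compact sets, not up to $\partial B_{R_i}(0)$; it also uses a transversality perturbation of $R_i$ so that $\Omega_i\cap\partial B_{R_i}(0)$ is a union of circles, which you omit.
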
  

\begin{proof}
We can explore the convex hulls in the same way as in $\mathsection 3$ of \cite{Calegari-Marques-Neves}, then Proposition 2.5.4 in \cite{Bowditch} and the Morse lemma give rise to a uniform constant $R_0>0$, so that \begin{equation*}
    d_H(C_h(\Lambda(\phi_i \Pi_i\phi_i^{-1}),C_{h_{hyp}}(\Lambda(\phi_i \Pi_i\phi_i^{-1}))\leq R_0,
\end{equation*} where $C_h$ and $C_{h_{hyp}}$ represent the convex hull with respect to metrics $h$ and $h_{hyp}$, respectively. Moreover, \cite{Calegari-Marques-Neves} also proves that
\begin{equation}\label{convexhull}
    \Omega_i\subset C_h(\Lambda(\phi_i \Pi_i\phi_i^{-1}).
\end{equation}
Let $H^r_i$ be the hypersurface in $\mathbb{H}^n$ with the fixed distance $r$ to $D_i$.
By the proof of Lemma \ref{uniqueness}, when $r>\tanh^{-1}\frac{|A|^2_{L^\infty(D_i)}}{2}$, $H^r_i$ is strictly convex and it bounds inside the convex hull of $\Lambda(\phi_i \Pi_i\phi_i^{-1})$, so  \begin{equation*}
    d_H(C_{h_{hyp}}(\Lambda(\phi_i \Pi_i\phi_i^{-1}),D_i)\leq \tanh^{-1}\frac{|A|^2_{L^\infty(D_i)}}{2}.
\end{equation*}  Combining these estimates, we conclude that the Hausdorff distance between $D_i$ and $\Omega_i$ is uniformly bounded.
So there exists $R>0$, for $i>>1$ and generic $r\geq R$, $\Omega_i$ intersects $B_{r}(0)$ by a union of circles. Then we can slightly perturb $R_i$ so that $\Omega_i\cap B_{R_i}(0)$ is a union of circles.

Let $\Omega_i^0$ be a component of $\Omega_i\cap B_{R_i}(0)$ that intersects $B_{R}(0)$, by (\ref{limitofbranch}), for sufficiently large $i$, $\Omega_i^0$ is free of branch points, so it is embedded in $B^n$.
We claim that it is a disc. Otherwise, if $\Omega_i^0$ were an annulus, then we could find a larger ball $B_{R_i'}(0)$ with some $R_i'>R_i$ whose boundary met tangentially with $\Omega_i^0$ at some point. However, the convexity of $\partial B_{R_i'}(0)$ and the minimality of $\Omega_i^0$ contradict the maximum principle. Therefore, $\Omega_i^0$ is a disc provided that $i$ is large enough. 
Furthermore, The small total curvature estimates based on (\ref{fff}) imply that \begin{equation}
\underset{i\rightarrow\infty}{\lim\sup}\{|K_{12}(x)+1|+\frac{1}{2}|A(x)|^2:x\in\Omega_i^0\}=0.
\end{equation}
From the standard compactness theorem for minimal surfaces with a uniform bound on second fundamental form, after passing to a subsequence, $\Omega_i^0$ converges smoothly to a minimal disc $\Omega$ in $(B^n,h)$. Moreover, $\Omega$ is totally geodesic and it has sectional curvature equal to $-1$. 

It remains to show that $\partial_\infty\Omega=\gamma$. 
Take a sequence $x_i\in \Omega_i^0$ that converges to $x\in \Omega$, and take $y_i\in \Lambda(\phi_i\Pi_i\phi_i^{-1})$. Let $\alpha_i$ be the geodesic arc in $(B^n,h)$ connecting $x_i$ to $y_i$, and let $\beta_i$ be the geodesic arc in $\Omega_i$ connecting $x_i$ to $y_i$. 
Due to (\ref{convexhull}) and Proposition 2.5.4 in \cite{Bowditch}, we can find a uniform number $r>0$, such that $\beta_i$ is contained in the $r$-neighborhood of $\alpha_i$. 
Additionally, since $\Omega$ is totally geodesic,  both $\alpha_i$ and $\beta_i$ converge to the same geodesic arc in $\Omega$ that connects $x$ to some $y\in\partial_\infty \Omega$. Then $y_i$ converges to $y$ on $S^{n-1}_\infty$. As a consequence, $\partial_\infty \Omega\subset \gamma$, since $\partial_\infty\Omega$ is a circle, it coincides with $\gamma$. 

\end{proof}

As defined in $\mathsection 5$ of \cite{Calegari-Marques-Neves}, $S_1^D(M)$ and $S_1^\Omega(M)$ denote the projections of the circle bundles of $D$ and $\Omega$ to the unit tangent bundles of $M$ with resprect to to $h_{hyp}$ and $h$, respectively. 
Since $S_1^D(M)$ is dense in the unit tangent bundle $S_1M(h_{hyp})$, then via the homeomorphism from $S_1M(h_{hyp})$ to $S_1M(h)$ that maps geodesics to geodesics \cite{Gromov2000}, we obtain that $S_1^\Omega(M)$ is dense in $S_1M(h)$. Thus for any $(x,v)\in S_1M(h)$, there is a sequence $\{\psi_i(\Omega)\}_i$, $\psi_i\in\Gamma$, converging to a totally geodesic hyperbolic disc $\Omega_{(x,v)}$ in $(B^n,h)$, whose projection in $M$ contains a geodesic passing through $x$ with direction $v$. According to the ergodicity of the 2-frame flows on the negatively curved manifolds of arbitrary odd dimensions ($\mathsection 4$ in \cite{Brin-Gromov}), the set of totally geodesic hyperbolic discs is dense in $Gr_2(M)$.

Therefore, if the equality (\ref{area}) holds, then $(M,h)$ is hyperbolic, and it is isometric to $(M,h_{hyp})$ due to Mostow rigidity theorem.

\subsection{Proof of rigidity in (b)}
First of all, if the metric $h$ has sectional curvature less than or equal to $-1$, then $\Pi\in S(M,\floor*{L},\epsilon)$ implies that $\text{area}_h(\Pi)\leq 4\pi(L-1)$ because of the generalized Gauss equation (\ref{generalizedgauss}), thus, $E(h)\geq 2= E(h_{hyp})$.

Next, suppose $E(h)=2$. Assume that there exists $\eta>0$, such that for all $L>0$ and all increasing sequence $\{k_i\}\subset\mathbb{N}$, the condition $\Pi\in G(M,\floor*{(1+\eta)L},\frac{1}{k_i})$ must produce that $\text{area}_h(\Pi)\leq 4\pi(L-1)$.
As a result,\begin{equation*}
    E(h)\geq\underset{L\rightarrow\infty}{\lim\inf}\frac{\ln{\#G(M,\floor*{(1+\eta)L},\frac{1}{k_i})}}{L\ln{L}}\geq 2(1+\eta),
\end{equation*}
which violates the assumption. Therefore, there exists an increasing sequence $\{k_i\}\subset\mathbb{N}$, a sequence of integers $\{g_i\}$ and $\Pi_i\in G(M,g_i,\frac{1}{k_i})$, so that \begin{equation*}
    \text{area}_h(\Pi_i)\leq 4\pi((1-\frac{1}{i})g_i-1).
\end{equation*}
Let $\Sigma_i$ and $S_i$ be the minimal surfaces that minimizes area in the homotopy class $\Pi_i$ with respect to metrics $h$ and $h_{hyp}$, respectively. Then from the inequality above and Theorem \ref{5.1},\begin{equation*}
     1\geq\underset{i\rightarrow\infty}{\lim\sup}\dfrac{\text{area}_h(\Sigma_i)}{\text{area}(S_i)}\geq \underset{i\rightarrow\infty}{\lim\inf}\dfrac{\text{area}_h(\Sigma_i)}{\text{area}(S_i)}\geq \underset{i\rightarrow\infty}{\lim\inf}\frac{4\pi((1-\frac{1}{i})g_i-1)}{4\pi(g_i-1)}=1.
\end{equation*}
The equality holds if and only if $h$ is hyperbolic, and thus it is isometric to $h_{hyp}$.

\section{Extension to locally symmetric spaces of rank one}\label{section_symmetricspace}
In this section, we extend Theorem \ref{main} to locally symmetric spaces of rank one apart from real hyperbolic manifolds.

\subsection{Definition of entropy}
Let $M$ be a closed $2n$ ($4n, 16$)-dimensional complex hyperbolic manifold (quaternionic hyperbolic manifold, Cayley plane, respectively), where $n\geq 2$. Then its sectional curvature is between $-4$ and $-1$. In the Siegel domain model of $\mathbb{H}_{\mathbb{C}}^n$ ($\mathbb{H}_{\mathbb{H}}^n, \mathbb{H}_{Ca}^2$), its boundary is identified with the one point compactification of the Heisenberg group. A totally geodesic disc in $\mathbb{H}_{\mathbb{C}}^n$ ($\mathbb{H}_{\mathbb{H}}^n, \mathbb{H}_{Ca}^2$) with constant sectional curvature $-1$ is called \emph{totally real} and is isometric to $\mathbb{H}_{\mathbb{R}}^2$, whose boundary is a \emph{real circle}. 

A $K$-quasiconformal map on $\partial\mathbb{H}_{\mathbb{C}}^n=S_\infty^{2n-1}$ ($\partial\mathbb{H}_{\mathbb{H}}^n=S_\infty^{4n-1}, \partial\mathbb{H}_{Ca}^2=S_\infty^{15}$) is defined as in Section \ref{1.1} with respect to Carnot-Carath{\'e}odory metric (see \cite{Koranyi-Reimann}).
In particular, for quaternionic hyperbolic spaces and Cayley plane, \cite{Pansu} points out that any quasiconformal maps are actually conformal. Then the quasi real circles are real circles, each of them determines a unique totally geodesic totally real disc. Therefore, if the manifold $M$ admits quaternionic hyperbolic or Cayley metric, then $S(M,g)=S(M,g,0)$, and let $S(M)=\underset{g\geq 2}{\cup}S(M,g)$. For any metric $h$ on $M$, the corresponding entropy is redefined as \begin{equation*}
     E(h)=\underset{L\rightarrow\infty}{\lim\inf}\,\dfrac{\ln\#\{\text{area}_h(\Pi)\leq 4\pi(L-1):\Pi\in S(M)\}}{L\ln L}.
\end{equation*}
Besides, if $M$ admits a complex hyperbolic metric, we still adopt the definition in (\ref{eh}). Then the main theorem related to the locally symmetric spaces is stated as follows.
\begin{Th}\label{main_sym}
Let $(M,h_{sym})$ be a closed locally symmetric space of rank one. And let $h$ be another metric on $M$. If the sectional curvature of $h$ is pointwisely less than or equal to that of the locally symmetric metric, then \begin{equation*}
        E(h)\geq E(h_{sym})=2,
    \end{equation*}
    If the equality holds, then $h$ is isometric to $h_{sym}$.
\end{Th}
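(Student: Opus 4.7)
The plan is to parallel the proof of Theorem \ref{main}: compute $E(h_{sym})=2$ by counting surface subgroups, use a Gauss-Bonnet comparison to establish $E(h)\ge 2$ when $K_h\le K_{h_{sym}}$ pointwise, and run an equidistribution argument in the equality case to force $h$ to be isometric to $h_{sym}$. For $E(h_{sym})=2$ I would repeat Section \ref{section3}: the Kahn-Markovic upper bound $s(M,g)\le (c_2g)^{2g}$ via pleated surfaces is insensitive to the rank one type and carries over. For the lower bound I would invoke Hamenst{\"a}dt's construction, which for every small $\epsilon'>0$ produces a well-distributed essential $(1+\epsilon',\epsilon')$-quasigeodesic surface $\Sigma_{\epsilon'}\subset M$ whose boundary map $f_1\colon S^1_\infty\to\partial_\infty M$ is a quasisymmetry, iteratively extended to a $(1+\epsilon)$-quasiconformal self-homeomorphism of the ideal boundary by Tukia-V{\"a}is{\"a}l{\"a}. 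In the complex hyperbolic case this yields a quasi real circle as the limit set; in the quaternionic or Cayley case Pansu's theorem forces the extension to be conformal, so the limit set is already a round real circle, which is precisely why the entropy is redefined without the $\epsilon$-parameter there. Counting commensurability classes of covers via M{\"u}ller-Puchta and interpolating as in Section \ref{subsection_b} yields $E(h_{sym})=2$.

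For the lower bound $E(h)\ge 2$, any 2-plane in $(M,h_{sym})$ has sectional curvature at most $-1$, with equality on totally real planes, so $K_h(\pi)\le K_{h_{sym}}(\pi)\le -1$ for every 2-plane $\pi$ in $TM$. For any essential minimal representative $\Sigma$ of genus $g$ in $(M,h)$, possibly with branch locus $P$, the generalized Gauss-Bonnet identity (\ref{generalizedgauss}) gives
\begin{equation*}
    \text{area}_h(\Sigma)=4\pi(g-1)+\int_{\Sigma\setminus P}(K_{12}+1)\,dA_h-\tfrac12\int_{\Sigma\setminus P}|A|^2\,dA_h-2\pi\sum_j d_j\le 4\pi(g-1).
\end{equation*}
Hence $\Pi\in S(M,\lfloor L\rfloor,\epsilon)$ (or $\Pi\in S(M,\lfloor L\rfloor)$ in the quaternionic/Cayley case) implies $\text{area}_h(\Pi)\le 4\pi(L-1)$, and substituting into the definition of $E(h)$ yields $E(h)\ge 2$.

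For the rigidity, suppose $E(h)=2$. Mimicking Section \ref{section_rigidity}, I would extract a sequence $\Pi_i\in G(M,g_i,\tfrac{1}{i})$ whose $h$-minimal representatives $\Sigma_i$ satisfy $\text{area}_h(\Sigma_i)/(4\pi(g_i-1))\to 1$, which by the identity above forces the three non-positive terms $(K_{12}+1)$, $-\tfrac12|A|^2$, and the branch-point contribution to concentrate to zero under normalization. Section \ref{section2}'s second fundamental form estimate extends to the rank one symmetric setting, as Uhlenbeck's convex-hull foliation uses only variable negative curvature and convex horospheres. Thus branch points vanish in the limit, and suitable isometric translations of the universal covers of $\Sigma_i$ converge smoothly to totally geodesic discs in $(M,h)$ of constant curvature $-1$. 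Pointwise equality $K_h=-1$ on these tangent planes together with $K_h\le K_{h_{sym}}\le -1$ forces $K_{h_{sym}}=-1$ there as well, so the limit discs are tangent to totally real 2-planes. An analogue of Lemma \ref{4.2}, using the measure $\nu$ of Section \ref{section_lower} together with Mozes-Shah/Ratner rigidity for unipotent actions on the rank one homogeneous space, should then show that the set of such totally geodesic totally real discs is dense in the totally real Grassmannian of $TM$.

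The main obstacle is the final step: upgrading this density to a global isometry between $h$ and $h_{sym}$. On the dense family of totally real planes one obtains $K_h=K_{h_{sym}}=-1$, but this is only partial information about the full curvature tensor of $h$, and the Brin-Gromov 2-frame flow argument used in Section \ref{section_rigidity} does not apply directly in the complex hyperbolic case where the 2-frame flow is not ergodic. I would attempt to close this gap by observing that the dense family of totally geodesic $\mathbb{H}^2$-leaves in $(M,h)$ coincides with a dense family of totally geodesic totally real discs for $h_{sym}$, and by propagating along pairs of transverse such leaves together with a holonomy-adapted density argument one should force $h$ to be locally symmetric. Mostow rigidity in the complex hyperbolic case and its Corlette-Gromov-Schoen extension in the quaternionic and Cayley cases would then identify $(M,h)$ isometrically with $(M,h_{sym})$.
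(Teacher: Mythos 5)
The computation of $E(h_{sym})=2$ and the Gauss--Bonnet comparison giving $E(h)\ge 2$ are essentially correct and follow the paper. The issue is the final rigidity step, where you correctly diagnose the obstacle but do not supply the tool that actually closes it.

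You observe, rightly, that the Brin--Gromov 2-frame flow ergodicity used for real hyperbolic $M$ in Section~\ref{section_rigidity} breaks down in the complex hyperbolic case, so one cannot directly conclude density of the limit discs in the full Grassmannian $Gr_2(M)$. Your proposed fix --- ``propagating along pairs of transverse leaves together with a holonomy-adapted density argument'' --- is not a concrete argument and would not be straightforward to make rigorous: having $K_h=-1$ on a dense family of 2-planes is weak information about the full curvature tensor of $h$, and there is no standard mechanism for promoting this to local symmetry directly. The paper's route is different and is the key missing idea: it invokes Hamenst\"adt's \emph{hyperbolic rank rigidity theorem} \cite{hamenstadt91}. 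After extracting the dense family of totally geodesic discs $\Omega$ of intrinsic curvature $-1$ in $(M,h)$ (via the analogues of Lemma~\ref{4.2} and Lemma~\ref{4.4}, using the measure $\nu$, Shah's density lemma, and Mozes--Shah equidistribution, which all carry over to rank one symmetric spaces), the paper uses Gromov's geodesic rigidity theorem \cite{Gromov2000} to conclude that \emph{every} geodesic in $(M,h)$ lies in a closed totally geodesic submanifold of curvature $-1$. This is precisely the statement that $(M,h)$ has positive hyperbolic rank, and Hamenst\"adt's theorem then says that a closed manifold with sectional curvature $\le -1$ and positive hyperbolic rank must be locally symmetric of rank one. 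Mostow/Corlette--Gromov--Schoen rigidity then gives the isometry with $h_{sym}$, as you anticipated.

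A secondary remark: your framing of the limit discs as ``tangent to totally real 2-planes'' with $K_{h_{sym}}=-1$ there is not quite how the paper reasons. One does not need to compare $h$ and $h_{sym}$ pointwise at this stage; the argument is entirely about the intrinsic geometry of $(M,h)$ (positive hyperbolic rank), and the identification with $h_{sym}$ comes only at the very end via the classification of rank one locally symmetric spaces and Mostow-type rigidity.
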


\subsection{Entropy of locally symmetric metric}
Hamenst{\"a}dt \cite{Hamenstadt} proved the existence of the surface subgroup of cocompact lattice in any simple rank one Lie group of noncompact type distinct from $SO(2m,1)$. From the prospective of geometry, let $M$ be any closed locally symmetric space except an even-dimensional real hyperbolic manifold,
then for sufficiently small $\epsilon$, there exists an essential surface $\Sigma_\epsilon\subset M$, which is $(1+o_\epsilon(1))$-quasigeodesic. As argued in Section \ref{section_lower}, it is associated with a surface subgroup in $S(M,g,\epsilon)$ for the complex hyperbolic case, or $S(M,g)$ for the quaternionic hyperbolic and Cayley case.
Moreover, $s(M,g,\epsilon)$ (or $s(M,g)$) is also bounded below by $(c_1g)^{2g}$. On the other hand, since the power of the upper bound of $s(M,g)$ in Section \ref{section_upper} only depends on the topology of closed surfaces, the upper 
bound $(c_2g)^{2g}$ also holds after modifying the coefficient $c_2$.

Firstly, if $(M,h_{sym})$ is quaternionic hyperbolic or Cayley hyperbolic, it's not hard to show $E(h_{sym})=2$ based on the above estimates.

If $(M,h_{sym})$ is complex hyperbolic, however, it requires more discussion on the second fundamental form as in Section \ref{section2}. Lemma \ref{lem_barrier} is still true because of the following fact. 
Let $\gamma$ be the image of a real circle by an $(1+\epsilon)$-quasiconformal map on $S^{2n-1}_\infty$. Then there exist $\epsilon'=o_\epsilon(1)$ and a real circle $c\subset S^{2n-1}_\infty$, such that the $\epsilon'$-neighborhood of $c$ in $S^{2n-1}_\infty$, denoted by $N$, contains $\gamma$. Any real circle in $\partial N$ bounds a totally geodesic totally real disc in $\mathbb{H}_{\mathbb{C}}^n$, then there is a hypersurface $T\subset \mathbb{H}_{\mathbb{C}}^n$ homeomorphic to $D^2\times S^{2n-3}$ asymptotic to $\partial N$, the diameter of $T$ converges to zero on compact sets as $\epsilon$ goes to zero.
Moreover, two of the principal curvatures of $T$ are zero, the others are at least $\dfrac{1}{\tanh 2r(x)}>0$, where $r(x)$ is the Euclidean radius of $T$ centered at $x$, therefore the convex hull of $\gamma$ lies in $T$.
Regarding the proof of Lemma \ref{uniqueness}, since all principal curvatures of the equidistant hypersurfaces satisfy the Riccati equations, 
the comparison theorem associated with Riccati equations (see \cite{Eschenburg}) ensures the two-convexity of each hypersurface. Moreover, the compactness theorem of the quasiconformal maps on the Heisenberg group can be found in \cite{Koranyi-Reimann}. So Lemma \ref{uniqueness} extends easily to the complex hyperbolic case. 
Likewise, since the manifold has pinched sectional curvature between $-4$ and $-1$, there are analogues of the inequalities (\ref{area_bound}) and (\ref{lower_bound}) (see \cite{Bangert-Lang}). Let $S\subset M$ be a surface that minimizes the area in its homotopy class contained in $S_\epsilon(M)$.
Then the convergence of area-minimizing surfaces mod 2 and the uniqueness indicate the absence of branch points on $S$, as well as the property that $|A|^2_{L^\infty(S)}\rightarrow 0$ as $\epsilon\rightarrow 0$. Following the computation in Section \ref{subsection_b}, we deduce that $E(h_{sym})=2$ for the complex hyperbolic case.

\subsection{Proof of rigidity}
Let $h$ be a metric on $M$ with sectional curvature less than or equal to that of the symmetric metric, then it follows from the generalized Gauss equation that $E(h)\geq 2$. 

To deduce the rigidity of Theorem \ref{main_sym}, the key idea is to apply the hyperbolic rank rigidity theorem established by Hamenst{\"a}dt in \cite{hamenstadt91}. Suppose that $N$ is a closed manifold whose sectional curvature is less than or equal to $-1$. The \emph{hyperbolic rank at $v\in T^1N$} is the dimension of the space generated by all parallel transports $J$ along geodesic $\gamma_v=\exp(tv)$ such that $J(t)\perp\gamma_v'(t)$ and $J(t),\gamma_v'(t)$ span a plane of sectional curvature $-1$. The \emph{hyperbolic rank of $N$} is the minimum of hyperbolic rank at all $v\in T^1N$. Hamenst{\"a}dt's theorem states that if such manifold $N$ has hyperbolic rank at least $1$, then it must be locally symmetric, namely a compact quotient of $\mathbb{H}_{\mathbb{R}}^n, \mathbb{H}_{\mathbb{C}}^n, \mathbb{H}_{\mathbb{H}}^n$ or $\mathbb{H}_{Ca}^2$.

Therefore, It suffices to check that the hyperbolic rank of $M$ is positive. 
Since the technic in \cite{Calegari-Marques-Neves} proving (\ref{star}), Shah's density lemma in \cite{Shah2}, as well as the equidistribution theorem by Mozes and Shah \cite{mozes_shah_1995} all apply to locally symmetric spaces, an analogue of Lemma \ref{4.2} can be deduced in the same way. Furthermore, repeating the same proof of Lemma \ref{4.4}, we obtain a totally geodesic disc $\Omega$ in $(B^m,h)$ of section curvature $-1$, whose limit set has a dense $\Gamma$-orbit on the set of all real circles in $S^{m-1}_\infty$, where $m$ is the dimension of $M$. This result in tandem with Gromov's geodesic rigidity theorem \cite{Gromov2000} implies that every geodesic along $v\in T^1M$ is contained in a closed totally geodesic submanifold of dimension $2\leq k\leq n$ and with sectional curvature $-1$. Therefore $(M,h)$ has positive hyperbolic rank, and finally the rigidity result of Theorem \ref{main_sym} follows from Hamenst{\"a}dt's hyperbolic rank rigidity theorem mentioned above.

\section{Extension to hyperbolic 3-manifolds of finite volume}\label{section_cusp}
In this section, we compute the entropy corresponding to the noncompact hyperbolic 3-manifolds of finite volume, which have finitely many cusps.
Suppose $(M,h_{hyp})$ is a hyperbolic 3-manifold with $k$ cusps, then $M$ can be realized by the interior of a compact hyperbolic manifold whose boundary consists of $k$ flat tori, we also denote the compact manifold with boundary by $M$.
As before, a closed surface immersed in $M$ is essential if the immersion is $\pi_1$-injective. Additionally, a noncompact surface (or a compact surface with boundary) is said to be \emph{essential} if the immersion is $\pi_1$-injective and $\pi_1$-injective relative to the boundary.
By Lemma 2.1 in \cite{Hass-Rubinstein-Wang}, any $\pi_1$ injective noncomapct surface with genus at least $2$ is also essential.
When $S\subset M$ is an essential surface, the image of $\pi_1(S)$ in $\pi_1(M)$ is called a \emph{surface subgroup}. 
Let $S_j(M,g)$ denote the set of surface subgroups up to conjugacy so that the corresponding surfaces have genus at most $g$ and at most $j$ simple closed curves on the boundary counting multiplicity, and let $S_j(M,g,\epsilon)\subset S_j(M,g)$ consist of the conjugacy classes whose limit sets are $(1+\epsilon)$-quasicircles. Moreover, \begin{equation*}
    S_j(M,\epsilon)=\underset{g\geq 2}{\cup}S_j(M,g,\epsilon).
\end{equation*} 
Given an arbitrary Riemannian metric $h$ on $M$, we define the entropy of $h$ on $M$. 
\begin{equation}\label{cusp_entropy}
    E_j(h)=\underset{\epsilon\rightarrow 0}{\lim}\,\underset{L\rightarrow\infty}{\lim\inf}\,\dfrac{\ln\#\{\text{area}_h(\Pi)\leq 4\pi(L-1):\Pi\in S_j(M,\epsilon)\}}{L\ln L}.
\end{equation}

The statement of the theroem is the following.
\begin{Th}\label{main_cusp}
Let $M$ be a hyperbolic 3-manifolds with $k$ cusps, then for any $j\in\mathbb{N}$, we have \begin{equation*}
        E_j(h_{hyp})=2.
    \end{equation*}
\end{Th}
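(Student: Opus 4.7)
The plan is to follow the strategy of Section \ref{section3} for the closed 3-dimensional case, adapting the counting arguments to account for the at most $j$ peripheral boundary curves permitted by the definition of $S_j(M, g, \epsilon)$. Since the ambient dimension is $3$, the branch-point analysis of Section \ref{section2} is not needed: area-minimizing representatives of $\pi_1$-injective immersed surfaces in a hyperbolic 3-manifold are immersions without branch points, and the Uhlenbeck/Seppi regularity gives $|A|^2_{L^\infty(\Sigma)} = o_\epsilon(1)$ for any minimal representative $\Sigma$ of a class in $S_j(M,g,\epsilon)$. This regularity is a statement in the universal cover $\mathbb{H}^3$ depending only on the $(1+\epsilon)$-quasicircle limit set, so it carries over unchanged to the cusped setting.

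Given this, Gauss-Bonnet applied to $\Sigma$ (whose Euler characteristic is $2-2g-j'$ for some $j' \leq j$, with $j'$ counting peripheral boundary components) gives
\begin{equation*}
\text{area}(\Sigma) = 4\pi(g-1) + 2\pi j' + o_\epsilon(1)\cdot\text{area}(\Sigma).
\end{equation*}
Since $j$ is fixed, the conditions $\text{area}(\Sigma) \leq 4\pi(L-1)$ and $g \leq L$ differ by at most a bounded shift as $L \to \infty$. Thus it suffices to show that $\#S_j(M, g, \epsilon)$ grows like $(c\, g)^{2g}$, exactly as in the closed case, after which the $\eta$-squeeze of Section \ref{subsection_b} yields $E_j(h_{hyp}) = 2$.

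The upper bound $\#S_j(M, g) \leq (c_2(j)\, g)^{2g}$ should follow by extending the pleated-surface counting of Kahn-Markovic (cf. Section \ref{section_upper}) to cusped hyperbolic 3-manifolds, using that any $\pi_1$-injective immersion with at most $j$ peripheral boundary components is homotopic to a pleated (possibly cusped) immersion whose combinatorial type is encoded by bounded-complexity data once $j$ is fixed. The lower bound is easier: by the Hamenst\"adt / Kahn-Markovic construction of closed essential $(1+\epsilon')$-quasigeodesic surfaces inside the convex core of the cusped manifold, combined with M\"uller-Puchta's formula applied to finite covers as in Section \ref{section_lower}, one obtains $\#S_0(M, g, \epsilon) \geq (c_1\, g)^{2g}$; since $S_0(M,\epsilon) \subset S_j(M,\epsilon)$ for every $j$, the same bound holds for $\#S_j(M, g, \epsilon)$.

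The main obstacle I anticipate is the upper bound: extending the Kahn-Markovic pleated-surface count to the cusped setting while keeping the combinatorial estimate uniform in the peripheral data, so that the constant $c_2(j)$ remains independent of $g$. The lower bound and the area-to-genus translation are essentially automatic once one invokes the standard existence result for nearly geodesic closed essential surfaces in cusped hyperbolic 3-manifolds and uses that the $o_\epsilon(1)$ curvature correction absorbs the bounded peripheral contribution in Gauss-Bonnet.
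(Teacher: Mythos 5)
Your lower bound and Gauss--Bonnet translation are essentially right (though the needed input is Kahn--Wright's recent construction of $(1+\epsilon)$-quasi-Fuchsian subgroups in cusped finite-volume hyperbolic 3-manifolds, not the original closed-manifold Kahn--Markovic/Hamenst\"adt results). But you correctly flag the upper bound as the obstacle, and that is exactly where a key idea is missing: you cannot simply ``extend the pleated-surface count to the cusped setting,'' because the pleated-surface estimate depends on a lower bound for the injectivity radius of the ambient manifold, which degenerates in the cusps. The paper resolves this in two steps that do not appear in your sketch. First, for closed minimal representatives (the set $S_0$), one uses the small $|A|^2$ bound to rule out accidental parabolics (Rubinstein) and then a direct area-replacement argument --- replace the portion of a minimal surface entering a cusp $T_i\times[r_0,\infty)$ beyond some uniform depth $R_0$ by a cheaper disc, contradicting least area --- to show every such surface lies in a fixed thick part $M_{[2s,\infty)}$. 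Only then does the compact-case count apply. Second, for noncompact representatives (the set $S_j\setminus S_0$), one invokes the Hass--Rubinstein--Wang bound that the number of boundary slopes realizable by genus-$\le g$ immersed essential surfaces is $N(g) = O(g^2)$, and caps off the at most $j$ boundary curves with discs to reduce to the closed case, picking up only a factor $\sum_{i=1}^j (kN(g))^i$ which is absorbed into the constant.

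You also implicitly assume the existence of area-minimizing representatives in the cusped ambient manifold, which is not automatic (the paper points to a counterexample of Hass--Scott in other noncompact 3-manifolds) and requires the existence theorems of Hass--Rubinstein--Wang, Ruberman, Collin--Hauswirth--Mazet--Rosenberg, and Huang--Wang specific to cusped hyperbolic 3-manifolds. Without either the thick-part confinement or the boundary-slope count, the upper bound --- and hence $E_j(h_{hyp}) \le 2$ --- does not follow from your argument.
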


\begin{Rem}
To evaluate the entropy of $M$ with respect to other metrics, we need additional existence arguments of least area surfaces, which is still an open problem. 
\end{Rem}

\subsection{Existence of minimal surfaces}
We've seen that for closed hyperbolic manifolds,  Schoen-Yau \cite{Schoen-Yau} and Sacks-Uhlenbeck \cite{Sacks-Uhlenbeck} proved that every surface subgroup produces a least-area surface in the homotopy class. However, the argument fails to hold for some noncompact ambient 3-manifolds (see Example 6.1 in \cite{Hass-Scott}).
In this section, we only list the existence results for hyperbolic 3-manifolds with finitely many cusps. 
First, it was stated by Hass-Rubinstein-Wang \cite{Hass-Rubinstein-Wang} and Ruberman \cite{Ruberman} that in a cusped hyperbolic 3-manifold, any noncompact  essential surface with genus at least $2$ can be homotoped to a least-area surface. 
Then in \cite{Collin-Hauswirth-Mazet-Rosenberg} and \cite{Collin2019CorrigendumT}, Collin-Hauswirth-Mazet-Rosenberg proved the existence of closed essential minimal surfaces embbedded in such manifolds. 
Later on, Huang-Wang addressed the question for immersed essential surfaces in \cite{Huang-Wang}, they showed that any immersed essential surface in a cusped hyperbolic 3-manifold with genus at least $2$ can be homotoped into an area-minimizer. 
Therefore, the entropy (\ref{cusp_entropy}) of cusped hyperbolic 3-manifolds $(M,h_{hyp})$ can be approximated by counting the least-area surfaces up to homotopy, and we'll estimate the upper bound and lower bound of $\#S_j(M,g,\epsilon)$ associated with $h_{hyp}$ and prove the theorem.

\subsection{The upper bound}
\subsubsection{Counting closed minimal surfaces}
First of all, notice that $S_0(M,g,\epsilon)$ consists of all the surface subgroups corresponding to closed immersed least-area surfaces, we estimate the cardinality of  $S_0(M,g,\epsilon)$ first.
By \cite{Rubinstern}, any closed immersed surface $\Sigma\subset M$ with $|A|^2_{L^\infty(\Sigma)}<2$ cannot have \emph{accidental parabolics}, meaning that there's no essential non-peripheral closed curve in $\Sigma$ that can be homotoped into the cusps in $M$, here a curve is said to be \emph{non-peripheral} if it does not bound an annulus. For sufficiently small $\epsilon$, 
the argument in Section \ref{section2} indicates that the minimal surface $S$ corresponding to an arbitrary element in $S(M,g,\epsilon)$ admits a small upper bound on the norm squared of the second fundamental form of $S$, thus it preserves parabolicity.

Then we claim that such a closed immersed least-area surface $S\subset M$ with genus $g\geq 2$ cannot go deeply into the cusps $T_1,\cdots,T_k$. To see this, note that each cusp of $M$ is homeomorphic to $T^2\times [0,\infty)$, where $T^2$ is a torus, then we choose the maximal cusped region of $M$, such that each cusp region can be represented by $T_i\times [r_0,\infty)$, and they are disjoint from each other.
Suppose $S$ intersects some $T_i\times [r_0,\infty)$, and we pick an arbitrary closed curve $\alpha$ lying in $S\cap (T_i^2\times \{r_0\})$. Firstly, suppose that $\alpha$ is essential. It must be non-peripheral, because otherwise, $S$ cannot be lifted to a closed surface embedded in a finite cover of $M$, which contradicts the fact that cusped hyperbolic 3-manifolds are locally extended residually finite (LERF, see Theorem 2.2 of \cite{Huang-Wang}).
Moreover, the lack of accidental parabolics forces the homotopy class of $\alpha$ mapping to the trivial element in $\mathbb{Z}\oplus\mathbb{Z}$. But this is impossible since $S$ is an essential surface in $M$. So we prove that $\alpha$ must be non-essential, namely, it bounds an embedded disc $D\subset S$. If $D$ goes deeply into the cusp, assume that $R_S$ is the maximum number of $r$ such that $D$ intersects $T_i^2\times \{r\}$, then according to the coarea formula (\cite{Wang09}, Lemma 4.1),
\begin{equation*}
    \text{area}(D)=\int_{r_0}^\infty \int_{D\cap (T_i\times\{r\})}\frac{1}{\cos\theta}dldr\geq \int_{r_0}^{R_S} l(r)dr,
\end{equation*}
where for any point $x\in D$, $\theta(x)$ is the angle between the tangent plane to $D$ at $x$, and the radial geodesic emanating from $x$ and perpendicular to $T_i\times\{r_0\}$, moreover, $l(r)$ represents the length of $D\cap (T_i\times\{r\})$.
However, we can choose a number $R_0<R_S$ that is independent of $S$, and replace $D$ by a disc $\bar{D}$ in $T_i^2\times [r_0,R_0]$ with $\partial \bar{D}=\alpha$, which has smaller area. To find $\bar{D}$, we only need to shrink $D\cap (T_i\times\{r\})$ to a shorter closed curve with length $\bar{l}(r)< \frac{1}{2}l(\frac{R_S}{R_0}r)$ for any $r\in[r_0,R_0]$, and make sure the corresponding angle $\bar{\theta}$ at each point is at most $\frac{\pi}{3}$. Thus, $\bar{D}$ is contained in $T_i\times [r_0,R_0]$, and \begin{align*}
    \text{area}(\bar{D})&=\int_{r_0}^{R_0}\int_{\bar{D}\cap (T_i\times\{r\})}\frac{1}{\cos\bar{\theta}}dldr\leq 2\int_{r_0}^{R_0}\bar{l}(r)dr< \int_{r_0}^{R_0} l(\frac{R_S}{R_0}r)dr\\
    &=\frac{R_0}{R_S}\int_{\frac{R_S}{R_0}r_0}^{R_S}l(r)dr<\int_{r_0}^{R_S}l(r)dr.
\end{align*}
So $\text{area}(\bar{D})<\text{area}(D)$, then let $\bar{S}=(S\setminus D)\cup \bar{D}$, we get $\text{area}(\bar{S})<\text{area}(S)$, but this cannot happen because $S$ has the least area. In consequence, all closed surfaces immersed in $M$ that minimize area in the homotopy classes are disjoint from $T_i\times (R_0,\infty)$.
In other words, there exists some $s>0$ depending only on $M$, such that all closed immersed minimal surfaces associated with elements in $S_0(M,g,\epsilon)$ are contained in the thick part $M_{[2s,\infty)}$, whose injectivity radius is at least $s$.
Therefore, it follows from Section \ref{section_upper} that \begin{equation}\label{s_0}
    \#S_0(M,g,\epsilon)=s(M_{[2s,\infty)},g,\epsilon)\leq (c_2g)^{2g},
\end{equation}
where $c_2>0$ depends only on $M$ and $\epsilon$.

\subsubsection{Counting noncompact minimal surfaces}
Next, we estimate $\#S_j(M,g,\epsilon)$ for $j\geq 1$. For sufficiently small $\epsilon$, let $S$ be a minimal surface corresponding to an element in $S_j(M,g,\epsilon)\setminus S_0(M,g,\epsilon)$, where $g\geq 2$ and $j\geq 1$, then $S$ can be seen as a compact surface with $j$ boundary curves counting multiplicity, each of which is a simple closed curve $C$ in one of the tori $T_i$, the homotopy class of $C$ can be identified with a slope in $\mathbb{Q}\cup\{\infty\}$.
Due to \cite{Hass-Rubinstein-Wang}, for each $g$, there is a finite number $N(g)>0$, such that for each boundary torus, the number of slopes that can be realized by boundary curves of immersed essential surfaces in $M$ with genus at most $g$ is bounded from above by $N(g)$. Moreover, $N(g)$ grows at most quadratically in $g$. Cutting off each boundary curve of $S$ and filling it with a disc, we obtain a closed surface in $M$ associated with an element in $S_0(M,g,\epsilon)$ up to homotopy. 
As a result, $\#S_j(M,g,\epsilon)$ is bounded by a constant depending on $k, j$ and $\epsilon$. More precisely, \begin{equation*}
    \#S_j(M,g,\epsilon)\leq \underset{i=1}{\overset{j}{\sum}}(kN(g))^i\#S_0(M,g,\epsilon)\leq (c_2'g)^{2g},
\end{equation*}
where $c_2'>0$ depends only on $M$ and $\epsilon$, and the last inequality follows from (\ref{s_0}).

\subsection{The lower bound}
Recently, Kahn-Wright \cite{kahn-wright2020} proved that when $M=\mathbb{H}^3/\Gamma$ is noncompact with finite volume, for any sufficiently small $\epsilon>0$, there exists $(1+\epsilon)$-quasi-Fuchsian surface subgroups of $\Gamma$. And the construction gives rise to an essential surface $\Sigma_\epsilon$, which is also sufficiently well-distributed and $(1+\epsilon)$-quasigeodesic. 
So as defined in the compact case, we let $G(M,g,\epsilon)$ be the subset of $S_0(M,g,\epsilon)$ consisting of homotopy classes of finite covers of $\Sigma_\epsilon$ with genus at most $g$, then we also have \begin{equation*}
    \#S_j(M,g,\epsilon) \geq \#S_0(M,g,\epsilon)\geq \#G(M,g,\epsilon)\geq (c_1g)^{2g},\quad \forall j\in\mathbb{N}
\end{equation*}
where $c_1>0$ depends only on $M$ and $\epsilon$.

\subsection{Proof of theorem}
If a surface subgroup $\Pi<\Gamma$ has $i$ cusps, then from \cite{Fang96}, \begin{equation*}
    \text{area}(\Pi)=4\pi(g-1)+2\pi i-\frac{1}{2}\int |A|^2dA.
\end{equation*}
Since it is proved in Section \ref{section2} that $|A|^2=o_\epsilon(1)$, and $i\leq j$ is uniformly bounded, then for any $\eta>0$, the following conclusions still hold for sufficiently large $L$ and sufficiently small $\epsilon$. For $\Pi\in S_j(M,\epsilon)$, if it satisfies $\text{area}(\Pi)\leq 4\pi(L-1)$, then we have $\Pi\in S_j(M,\floor*{(1+\eta)L},\epsilon)$. On the other hand, if $\Pi\in S_j(M,\floor*{(1-\eta)L},\epsilon)$,
then $\text{area}(\Pi)\leq 4\pi(L-1)$.
Therefore the previous argument shows that $E_j(h_{hyp})=2$.

\bibliographystyle{plain} 
\bibliography{counting}   
\end{document}